\documentclass[11pt, twoside]{amsart}

\title[Morita invariants of quasitriangular coideal subalgebras]{Morita invariants of quasitriangular coideal subalgebras}


\author{Monique M\"{u}ller}
\address{Departamento de Matem\'atica e Estat\'istica, Universidade Federal de S\~ao Jo\~ao del-Rei, Pra\c ca Frei Orlando, 170, S\~ao Jo\~ao del-Rei, MG 36307-352, Brazil \& Department of Mathematics, Indiana University, USA}
\email{monique@ufsj.edu.br}

\author{Chelsea Walton}
\address{Department of Mathematics, Rice University,
P.O. Box 1892, Houston, TX 77005-1892, USA}
\email{notlaw@rice.edu}



\usepackage{mathabx}
\usepackage{import}
\usepackage{amsmath}
\usepackage{amsfonts, stmaryrd}
\usepackage{amsthm}
\usepackage{amssymb,bbm}
\usepackage{dsfont}
\usepackage[alphabetic, initials, nobysame]{amsrefs}
\usepackage[english]{babel}
\usepackage{url}
\usepackage{fancyhdr}
\usepackage{graphicx}
\usepackage{verbatim}
\usepackage[normalem]{ulem}
\usepackage[shortlabels]{enumitem}
\newcommand{\stkout}[1]{\ifmmode\text{\sout{\ensuremath{#1}}}\else\sout{#1}\fi}
\usepackage[
colorlinks=true,
linkcolor=black, 
anchorcolor=black,
citecolor=black,
urlcolor=black, 
]{hyperref}
\usepackage[all,cmtip]{xy}
\usepackage{geometry}
\usepackage{microtype} 
\usepackage[dvipsnames]{xcolor}
\usepackage{tikz}

\allowdisplaybreaks



\input xy
\xyoption{all}


\definecolor{forest}{rgb}{0.0, 0.5, 0.0}

\newcommand{\bijar}[1][]{%
 \ar[#1]
 \ar@<0.7ex>@{}[#1]|-*=0[@]{\sim}} 

\setlength\topmargin{0in}
\setlength\headheight{0in}
\setlength\headsep{0.4in}
\setlength\textheight{8.6in}
 \setlength\textwidth{6.5in}
\setlength\oddsidemargin{0in}
\setlength\evensidemargin{0in}

\usepackage{enumitem}


\DeclareMathAlphabet{\mathsf}{OT1}{cmss}{m}{n} 

\usepackage[justification=centering]{caption}

\newcommand{\lmod}[1]{#1\text{-}\mathsf{Mod}}


\newcommand{\act}{\triangleright}
\DeclareFontFamily{U}{stixscr}{}
\DeclareFontShape{U}{stixscr}{m}{n}{<-> s*[0.8] stix-mathscr}{}

\newcommand{\Aut}{\operatorname{Aut}}

\newcommand{\ide}{\mathsf{Id}}

\newcommand\natisom{\stackrel{\hbox{$\sim$\hspace{.02in}}}{\smash{\Rightarrow}\rule{0pt}{0.4ex}}}

\newcommand{\one}{\mathds{1}}




\newcommand\equivto{\stackrel{\hbox{$\sim$\hspace{.02in}}}{\smash{\to}\rule{0pt}{0.4ex}}}
\newcommand{\eps}{\varepsilon}




\newtheoremstyle{defstyle}
  {0.5cm}                   
  {0.5cm}                   
  {\normalfont}           
  {}     
  {\normalfont\bfseries}  
  {:}                     
  {0.3cm}              
  {\thmname{#1}\thmnumber{ #2}\thmnote{ (#3)}}

\numberwithin{equation}{section}

\newtheorem*{rep@theorem}{\rep@title}
\newcommand{\newreptheorem}[2]{%
\newenvironment{rep#1}[1]{%
 \def\rep@title{#2 \ref{##1}}%
 \begin{rep@theorem}}%
 {\end{rep@theorem}}}
\makeatother

\newtheorem{theorem}{Theorem}[section]

\newtheorem{proposition}[theorem]{Proposition}
\newreptheorem{proposition}{Proposition}
\newtheorem{corollary}[theorem]{Corollary}
\newreptheorem{corollary}{Corollary}
\newtheorem{lemma}[theorem]{Lemma}

\newtheorem{theorem*}{Theorem}
\newreptheorem{theorem}{Theorem}

\theoremstyle{definition}
\newtheorem{definition}[theorem]{Definition}

\newtheorem{example}[theorem]{Example}
\newtheorem{remark}[theorem]{Remark}

\newtheorem{question}[theorem]{Question}

\makeatletter              
\let\c@equation\c@theorem  
\makeatother
\numberwithin{equation}{section}




\makeatletter
\@namedef{subjclassname@2020}{%
  \textup{2020} Mathematics Subject Classification}
\makeatother

\subjclass[2020]{16T05, 18M15, 20F36}
\keywords{braid group, braided Morita equivalence, K-matrix, quasitriangular coideal subalgebra}

\begin{document}

\begin{abstract}
We use representations of braid groups of Coxeter types BC and D to produce invariants of representation categories of quasitriangular coideal subalgebras. Such categories form a prevalent class of braided module categories. This is analogous to how representations of braid groups of Coxeter type A produce invariants of representation categories of quasitriangular Hopf algebras, a prevalent class of braided monoidal categories.  This work also includes concrete examples, and classification results for $K$-matrices of quasitriangular coideal subalgebras. 
\end{abstract}

\maketitle

\setcounter{tocdepth}{2}



\section{Introduction}\label{sec:intro}

The focus of our work is on the Morita equivalence for certain algebraic structures in the braided setting. For instance, recall that the category of modules over a quasitriangular Hopf algebra $(H,R)$ is a braided tensor category, where the $R$-matrix $R$ of $H$ determines precisely the braiding of $\lmod{H}$ (see, e.g., \cite[Proposition XIII.1.4]{Kassel}). Consider the following terminology.

\begin{definition}
Two quasitriangular Hopf algebras $(H,R)$ and $(H',R')$ are said to be {\it braided Morita equivalent} if their categories of modules are equivalent as braided tensor categories (i.e., there is an equivalence of categories  given by a linear, strong monoidal functor that preserves braidings).
\end{definition}

For instance, given two finite dimensional Hopf algebras $L$ and $L'$, the Drinfeld doubles $D(L)$ and $D(L')$ are braided Morita equivalent if and only if $\lmod{L}$ and $\lmod{L'}$ are {\it categorically Morita equivalent} \cite[Theorem~8.12.3]{EGNO}. See the work of Naidu \cite{Naidu} for the case when $L$ and $L'$ are finite group algebras.
Moreover, this notion of braided Morita equivalence can  be extended to quasitriangular quasi-Hopf algebras. In particular,  the braided Morita equivalence classes of twisted quantum doubles of finite groups $D^{\omega}(G)$ were determined by Naidu and Nikshych \cite[Corollary~1.5]{Naidu-Nikshych}. 
Their  representation categories are key examples of modular fusion categories; see, e.g., \cite{GruenMorrison} and the references within. 
On the other hand, in non-semisimple modular setting, Negron established a braided tensor (in fact, ribbon) equivalence between two quasi-Hopf algebras, $u_q^{\text{M}}(\mathfrak{sl}_2)$ and $u_q^{\phi}(\mathfrak{sl}_2)$,  at an even root of unity $q$  \cite[Corollary~10.4]{Negron}. Here,  $u_q^{\text{M}}(\mathfrak{sl}_2)$ is constructed from a de-equivariantization of a representation category of Lusztig's divided power algebra $U_q^{\text{L}}(\mathfrak{sl}_2)$  \cite{Lusztig}, and $u_q^{\phi}(\mathfrak{sl}_2)$ is produced by a local modules construction in \cite{CGR}. But in the works above, the standard $R$-matrix/braiding of the (quasi-)Hopf algebra/representation category is used. That said, a full classification of the quasitriangular structures on the Kac-Paljutkin Hopf algebra $H_8$, up to braided Morita equivalence, was achieved by Wakui \cite{Wakui2019} using the polynomial invariants from \cite{Wakui}; this used the  classification of $R$-matrices of $H_8$ by Suzuki \cite{Suzuki}.

\smallskip

Here, we initiate the study of a module-categorical version of braided Morita equivalence. Note that for a quasitriangular Hopf algebra $(H,R)$, the category of modules over a left $H$-comodule algebra $B$ is a left $(\lmod{H})$-module category, which is braided precisely when $B$ is equipped with a $K$-matrix~$K$ (see, e.g., \cite[Lemma 3.26]{LWY}).  In this case, we say that $(B,K)$ is a {\it quasitriangular} $H$-comodule algebra as coined by Kolb \cite{Kolb20}. Now, consider the terminology below.

\begin{definition} Fix a quasitriangular Hopf algebra $(H,R)$. We say that two quasitriangular left $H$-comodule algebras  $(B,K)$ and $(B',K')$ are  {\it braided Morita equivalent} if their categories of modules are equivalent as braided left $(\lmod{H})$-module categories (i.e., there is an equivalence of categories  given by a linear, left $(\lmod{H})$-module functor that preserves braidings).
\end{definition}

For our results on the braided Morita equivalence of quasitriangular comodule algebras, we focus on coideal subalgebras and develop equivalence class invariants analogous to those  of Shimizu for  quasitriangular Hopf algebras using braid groups \cite[Theorem~3.1]{Shimizu2010}. Indeed, braided tensor categories yield representations of braid groups of Coxeter type A, while braided module categories yield representations of braid groups of Coxeter type BC; see Propositions~\ref{prop:typeA} and~\ref{prop:typeBC}. We also develop invariants here in Coxeter type D; see Proposition~\ref{prop:typeD}. The main result of the article is presented below; here, the subscript ``\textnormal{reg}" attached to an algebra denotes the regular module.

\begin{theorem}[Theorems~\ref{thm:typeBC} and~\ref{thm:typeD}] \label{thm:main-intro}
Fix a finite-dimensional quasitriangular Hopf algebra~$H$.  
\begin{enumerate}[\upshape (a)]
\item Take quasitriangular left coideal subalgebras  $B$ and $B'$. If $\lmod{B} \simeq \lmod{B'}$ as braided left $(\lmod{H})$-module categories, then the Type BC representations 
$\rho_n^{H_\textnormal{reg}, B_\textnormal{reg}}$ and $ \rho_n^{H_\textnormal{reg}, B'_\textnormal{reg}}$  from Proposition~\ref{prop:typeBC} are equivalent, for  each $n \geq 2$.

\smallskip

\item Take triangular left coideal subalgebras  $B$ and $B'$. If $\lmod{B} \simeq \lmod{B'}$ as braided left $(\lmod{H})$-module categories, then the Type D representations 
$\rho_n^{H_\textnormal{reg}, B_\textnormal{reg}}$ and $ \rho_n^{H_\textnormal{reg}, B'_\textnormal{reg}}$  from Proposition~\ref{prop:typeD} are equivalent, for  each $n \geq 2$. \qed
\end{enumerate}
\end{theorem}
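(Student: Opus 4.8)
The plan is to adapt Shimizu's argument for quasitriangular Hopf algebras \cite{Shimizu2010} to the module-categorical setting. It has two ingredients: \emph{(i)} the braid group representations of Propositions~\ref{prop:typeBC} and~\ref{prop:typeD} are natural with respect to braided $(\lmod{H})$-module equivalences; and \emph{(ii)} for $H$-simple augmented $B$, the object $B_{\textnormal{reg}}$ is determined up to isomorphism, intrinsically, by the braided $(\lmod{H})$-module category $\lmod{B}$. Granting both, if $F\colon \lmod{B} \to \lmod{B'}$ is a braided $(\lmod{H})$-module equivalence, then \emph{(i)} gives $\rho_n^{H_{\textnormal{reg}}, B_{\textnormal{reg}}} \cong \rho_n^{H_{\textnormal{reg}}, F(B_{\textnormal{reg}})}$ and \emph{(ii)} gives $F(B_{\textnormal{reg}}) \cong B'_{\textnormal{reg}}$, which together yield the claimed equivalence $\rho_n^{H_{\textnormal{reg}}, B_{\textnormal{reg}}} \cong \rho_n^{H_{\textnormal{reg}}, B'_{\textnormal{reg}}}$.

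For \emph{(i)}, write $s_{X,N}\colon F(X\act N)\to X\act F(N)$ for the module constraint of $F$, with $X\in\lmod{H}$ and $N\in\lmod{B}$. Iterating $s$ gives an isomorphism $F\bigl(M^{\otimes(n-1)}\act N\bigr)\cong M^{\otimes(n-1)}\act F(N)$ for each $M\in\lmod{H}$. By Proposition~\ref{prop:typeBC}, $\rho_n^{M,N}$ is generated by the braidings of $\lmod{H}$ between consecutive copies of $M$ (the Coxeter type~A generators of Proposition~\ref{prop:typeA}) together with the module braiding of $\lmod{B}$ on the $M\act N$ slot (the reflection generator, built from the $K$-matrix). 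The coherence axioms of a module functor force $s$ to intertwine the braidings of $\lmod{H}$, and the hypothesis that $F$ is a \emph{braided} module functor says exactly that $s$ intertwines the module braidings; hence the iterated isomorphism intertwines all defining generators, so $F$ carries $\rho_n^{M,N}$ to an equivalent representation $\rho_n^{M,F(N)}$. Part (b) is the same: triangularity makes the module braiding an involution, so Proposition~\ref{prop:typeD} applies, and a braided module functor also respects the extra relation distinguishing the Coxeter type~D braid group.

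For \emph{(ii)}, I would use that $H$-simplicity of $B$ and $B'$ makes $\lmod{B}$ and $\lmod{B'}$ exact indecomposable $(\lmod{H})$-module categories, in which $B_{\textnormal{reg}}$ and $B'_{\textnormal{reg}}$ are generators whose internal endomorphism algebras in $\lmod{H}$ recover $B$ and $B'$ as $H$-comodule algebras. The augmentation then provides a distinguished one-dimensional object $\Bbbk_{\varepsilon}\in\lmod{B}$, and the point is that $B_{\textnormal{reg}}$ is recovered from $H_{\textnormal{reg}}$ and $\Bbbk_{\varepsilon}$ in a manner transported by $F$ — concretely, as the indecomposable constituent of $H_{\textnormal{reg}}\act\Bbbk_{\varepsilon}$, once $H$-simplicity is invoked to control the relevant freeness — so that $F(B_{\textnormal{reg}})$, being built the same way from $H_{\textnormal{reg}}$ and the one-dimensional object $F(\Bbbk_{\varepsilon})\in\lmod{B'}$, is isomorphic to $B'_{\textnormal{reg}}$. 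This is the module-theoretic analogue of the fact, used by Shimizu, that a braided tensor equivalence of $\lmod{H}$'s preserves the regular algebra object.

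The main obstacle is step \emph{(ii)}: an arbitrary $(\lmod{H})$-module equivalence need not respect regular objects, and proving $F(B_{\textnormal{reg}})\cong B'_{\textnormal{reg}}$ requires the structure theory of exact module categories over $\lmod{H}$ together with the classification of $H$-simple augmented (quasi)triangular $H$-comodule algebras established elsewhere in the paper — in particular, showing that $F$ preserves the distinguished one-dimensional object well enough and that the residual ambiguity is absorbed by $(\lmod{H})$-module autoequivalences of $\lmod{B'}$ fixing $B'_{\textnormal{reg}}$. A secondary technical point is the bookkeeping in step \emph{(i)}: one must check that the \emph{single} isomorphism obtained by iterating $s$ simultaneously intertwines every Coxeter-type generator — the braidings of $\lmod{H}$, the $K$-matrix reflection, and, in (b), the extra type~D generator — which amounts to unwinding the coherence among the braiding of $\lmod{H}$, the module constraint of $F$, and the module braiding of $\lmod{B}$.
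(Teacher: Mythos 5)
Your architecture matches the paper's --- one transports the braid generators through the module constraint $s$ of $F$ (your step \emph{(i)}), and separately identifies $F(B_{\textnormal{reg}})$ with $B'_{\textnormal{reg}}$ (your step \emph{(ii)}) --- but the step you yourself flag as the main obstacle is exactly the one that has to be supplied, so the proposal as written has a genuine gap. The missing ingredient is Proposition~\ref{prop:AM-S}: by Andruskiewitsch--Mombelli \cite{AndruskiewitschMombelli}, any left $(\lmod{H})$-module equivalence $F$ is isomorphic to $P\otimes_B -$ for an invertible $H$-equivariant $(B',B)$-bimodule $P$, and Skryabin's freeness theorem \cite{Skryabin} applied to the maximal ideal $\ker(\eps_B)$ --- this is precisely where augmentation and $H$-simplicity enter --- forces $P$ to be free as a right $B$-module and free as a left $B'$-module; invertibility of $P$ then pins both ranks to one, so $P\cong B$ as right $B$-modules and $P\cong B'$ as left $B'$-modules. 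The second isomorphism gives $F(B_{\textnormal{reg}})\cong P\otimes_B B\cong B'_{\textnormal{reg}}$ outright: no intrinsic characterization of the regular object inside the module category is needed, and your proposed route via the augmentation module $\Bbbk_{\eps}$ would in any case stall, since $F$ has no reason to carry $\Bbbk_{\eps}$ to the augmentation module of $B'$ determined by $\eps_{B'}$, and $B_{\textnormal{reg}}$ is generally decomposable, so ``the indecomposable constituent of $H_{\textnormal{reg}}\act\Bbbk_{\eps}$'' does not isolate it.

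The first isomorphism $P\cong B$ repairs a second gap that you classify as bookkeeping but which is also substantive. Your step \emph{(i)} produces an isomorphism $F(H^{\otimes n}\act B)\cong H^{\otimes n}\act F(B)$ \emph{in} $\lmod{B'}$ intertwining $F(\rho(\theta))$ with $\rho^{H_{\textnormal{reg}},F(B_{\textnormal{reg}})}(\theta)$; but an equivalence of \emph{linear representations} additionally requires a linear isomorphism from the underlying vector space $H^{\otimes n}\otimes B$ to that of $F(H^{\otimes n}\otimes B)$ intertwining $\rho(\theta)$ with $F(\rho(\theta))$. A $\Bbbk$-linear equivalence does not provide this for free (it can change underlying dimensions); the paper constructs it as the component at $H^{\otimes n}\otimes B$ of a natural isomorphism from the forgetful functor $U$ to $U'F$, namely $v\mapsto\phi^{-1}(1_B)\otimes_B v$ for a right $B$-module isomorphism $\phi\colon P\equivto B$. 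Once both consequences of Proposition~\ref{prop:AM-S} are in place, the rest of your outline --- naturality and the pentagon axiom for $s$, condition \eqref{eq:brmodfun} for the generator $t$, and, for part (b), the involutivity of $e$ coming from triangularity --- goes through essentially as you describe.
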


Two ingredients in the proof of Theorem~\ref{thm:main-intro} are  Andruskiewitsch--Mombelli's characterization of the Morita equivalence of comodule algebras in terms of {\it equivariant bimodules} \cite{AndruskiewitschMombelli}, and Skryabin's freeness results for such bimodules \cite{Skryabin}. In these results,  $H$-simplicity and augmentation conditions on a comodule algebra are used, which are equivalent to being a coideal subalgebra [Lemma~\ref{lem:HsimpleCSA}].  See also Propositions~\ref{prop:AM-S} and~\ref{prop:equivalence}. On the other hand, the triangularity condition above is tied to when a braided module category is {\it symmetric}; see Sections~\ref{sec:braidmm},~\ref{sec:quasitrian-ca}, and Remark~\ref{rem:allcock}.

\smallskip

Our next set of results use the invariants produced in Theorem~\ref{thm:main-intro} to classify braided Morita equivalence classes of quasitriangular coideal subalgebras over a finite group algebra and over the Sweedler algebra. Here, we classify $K$-matrices of such coideal subalgebras, which is of independent interest. Just like $R$-matrices provide solutions to the quantum Yang--Baxter equation (qYBE) (see, e.g., \cite[$\S$VIII.1]{Kassel}), $K$-matrices provide solutions to the boundary qYBE (or the quantum reflection equation); see \cite{BalagovicKolb} and references within. See also Lemma~\ref{lem:cece}, with \eqref{eq:formula-c} and \eqref{eq:formula-e}.

\begin{theorem}
[Proposition~\ref{prop:kG-K}, Theorem~\ref{thm:kG}]  \label{thm:kG-intro}
Fix a finite group $G$, and take the finite-dimensional quasitriangular Hopf algebra $\Bbbk G$ with $R$-matrix $R_u$ as in \eqref{eq:Ru}. 
Then, the statements below hold.
\begin{enumerate}[\upshape (a)]
\item Coideal subalgebras of $\Bbbk G$ are of the form $\Bbbk L$, for $L$ a subgroup of $G$.
\smallskip
\item A $K$-matrix of $\Bbbk L$ is of the form $a \otimes 1$, for $a$ in the centralizer of $L$ in $G$. 
\smallskip
\item Two quasitriangular left $\Bbbk G$-comodule algebras $(L,a \otimes 1)$ and $(L',a' \otimes 1)$ are braided Morita equivalent if and only if they are conjugate.  \qed
\end{enumerate}
\end{theorem}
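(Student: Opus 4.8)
The plan is to handle the three parts in turn, with (c) carrying the real content.

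For (a), I would run the standard argument for coideal subalgebras of a group algebra: if $B \subseteq \Bbbk G$ is a left coideal subalgebra then $\Delta(b) \in \Bbbk G \otimes B$ for every $b \in B$, and since $\Delta(g) = g \otimes g$ with the group elements linearly independent in the left tensor leg, $B$ must be spanned by $L := B \cap G$; being a unital subalgebra of the algebra of a finite group, $L$ is a subgroup, so $B = \Bbbk L$, and the converse is immediate (with coaction $\ell \mapsto \ell \otimes \ell$). For (b), I would write a candidate $K = \sum_{g \in G,\ \ell \in L} k_{g,\ell}\, g \otimes \ell \in \Bbbk G \otimes \Bbbk L$ and impose the $K$-matrix axioms for $(\Bbbk G, R_u)$ recorded in Lemma~\ref{lem:cece}, using $\Delta(g) = g \otimes g$, $\eps(g) = 1$, cocommutativity of $\Bbbk G$ (so the $R_u$-twists collapse), and the coaction $\ell \mapsto \ell \otimes \ell$ on $\Bbbk L$. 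The coassociativity-type axiom should force the $\Bbbk G$-leg of $K$ to be a single group-like element, hence a group element $a \in G$ (not merely an element of $\Bbbk G$); the counit/normalization axiom should force the $\Bbbk L$-leg to be $1$, giving $K = a \otimes 1$; and the compatibility of $K$ with the coaction of $\Bbbk L$ should reduce to $a\ell = \ell a$ for all $\ell \in L$, i.e.\ $a \in C_G(L)$. A direct verification gives the converse.

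For (c), the main tool will be the description of braided Morita equivalence of quasitriangular comodule algebras via equivariant bimodules; since $\Bbbk L, \Bbbk L' \subseteq \Bbbk G$ are coideal subalgebras, they are $\Bbbk G$-simple and augmented, so Propositions~\ref{prop:AM-S} and~\ref{prop:equivalence} apply. For ``conjugate $\Rightarrow$ braided Morita equivalent'' (where conjugate means some $x \in G$ satisfies $xLx^{-1} = L'$ and $xax^{-1} = a'$), I would set $M := \Bbbk\, xL = \Bbbk\, L'x$: this is an invertible $(\Bbbk L', \Bbbk L)$-bimodule (inverse $\Bbbk\, x^{-1}L'$) carrying the $\Bbbk G$-coaction $m \mapsto m \otimes m$ on group elements, hence a $\Bbbk G$-equivariant bimodule, so that $M \otimes_{\Bbbk L} (-)$ is an equivalence $\lmod{\Bbbk L} \simeq \lmod{\Bbbk L'}$ of $(\lmod{\Bbbk G})$-module categories. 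It then remains to check that this functor intertwines the cylinder braidings determined by $K = a \otimes 1$ and $K' = a' \otimes 1$ through the formula \eqref{eq:formula-e}, which I expect to reduce to the single identity $xa = a'x$ in $G$ — precisely the hypothesis. For the converse, a braided $(\lmod{\Bbbk G})$-module equivalence $\lmod{\Bbbk L} \simeq \lmod{\Bbbk L'}$, after forgetting the braiding, is realized via Propositions~\ref{prop:AM-S} and~\ref{prop:equivalence} (hence via Skryabin's freeness theorem) as $M \otimes_{\Bbbk L}(-)$ for an invertible $\Bbbk G$-equivariant $(\Bbbk L', \Bbbk L)$-bimodule $M$; since such an $M$ is a $G$-graded bimodule, invertibility forces it to be one-dimensional in each occupied degree and supported on a single left $L'$-coset and on a single right $L$-coset, which forces these to coincide and hence $L' = xLx^{-1}$ for a degree $x$ appearing in $M$, with $M \cong \Bbbk\, xL$. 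In particular $L \sim_G L'$, and re-imposing compatibility with the braidings via \eqref{eq:formula-e} forces $xax^{-1} = a'$, so $(L,a)$ and $(L',a')$ are conjugate. (Alternatively, one could separate inequivalent classes using the braid-group invariants of Theorems~\ref{thm:typeBC} and~\ref{thm:typeD}.)

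The steps I expect to be most delicate are two. First, in (b), making the $K$-matrix axioms genuinely pin $a$ down to an element of $G$ rather than a general element of $\Bbbk G$ — this is exactly where cocommutativity of $\Bbbk G$ and the explicit shape of $R_u$ in \eqref{eq:Ru} must be exploited. Second, in the converse half of (c), upgrading the abstract equivariant invertible bimodule to the honest coset module $\Bbbk\, xL$ — which means ruling out cocycle twists of the bimodule structure, and these should vanish precisely because $\Bbbk L$ and $\Bbbk L'$ both realize the trivial-cocycle module categories over $\lmod{\Bbbk G}$ — and then reading off from \eqref{eq:formula-e} that braiding-compatibility is the sharp condition $xax^{-1} = a'$ and nothing weaker.
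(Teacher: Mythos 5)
Your proposal is correct and follows essentially the same route as the paper: part (b) by writing a general $K$ and imposing the three axioms of \eqref{eq:K-matrix} (note these are the axioms you want, not Lemma~\ref{lem:cece}, and the $\Bbbk L$-leg is killed by axiom (ii) rather than any ``counit'' axiom), and part (c) via Propositions~\ref{prop:AM-S} and~\ref{prop:equivalence}, with the coset bimodule $\Bbbk\, gL$ in the forward direction and Skryabin's freeness plus a coset-size count pinning down $P\cong \Bbbk(L^g gL)$ in the converse, after which item (iii) of Proposition~\ref{prop:equivalence} evaluated at $p=g$, $x=1$, $m=1$ yields $a'=gag^{-1}$. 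These are exactly the steps in Proposition~\ref{prop:kG-K} and Theorem~\ref{thm:kG}.
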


\begin{theorem}[Proposition~\ref{prop:H4-K}, Theorem~\ref{thm:H4}] \label{thm:H4-intro}
Take the 4-dimensional quasitriangular Sweedler algebra $H_4$ with $R$-matrix $R_\lambda$ as in \eqref{eq:Rlambda}. 
Then, the statements below hold.
\begin{enumerate}[\upshape (a)]
\item \cite{CKS} The coideal subalgebras of $H_4$ are classified, and this includes one infinite family of proper coideal subalgebras, along with one proper coideal subalgebra outside of the family.
\smallskip
\item A $K$-matrix of a coideal subalgebra of $H_4$ must be trivial, except for $\Bbbk$ and for one of the proper coideal subalgebras in the case when $\lambda = 0$. 
\smallskip
\item There are six (resp., four) braided Morita singleton equivalence classes of quasitriangular coideal subalgebras of $H_4$ when $\lambda = 0$ (resp., when $\lambda \neq 0$), and the rest of the classes consist of pairs of quasitriangular coideal subalgebras. \qed
\end{enumerate} 
\end{theorem}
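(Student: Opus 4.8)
The plan is to treat (a), (b), (c) in turn, reducing everything to finite computations inside $H_4=\Bbbk\langle g,x\rangle/(g^2-1,\,x^2,\,gx+xg)$. For (a) I would first use that $H_4$ is free over any coideal subalgebra $B$ (Skryabin's freeness theorem \cite{Skryabin}, already invoked for Proposition~\ref{prop:AM-S}), so $\dim_\Bbbk B\in\{1,2,4\}$; the extreme cases give $\Bbbk$ and $H_4$. For $\dim_\Bbbk B=2$, writing $B=\Bbbk 1\oplus\Bbbk v$ with $v$ in the span of $g,x,gx$ and imposing $v^2\in B$ together with $\Delta(v)\in H_4\otimes B$ yields, after expanding in the standard basis, only $\Bbbk[g]=\Bbbk\langle 1,g\rangle$, the subalgebra $\Bbbk\langle 1,x\rangle$ generated by a skew-primitive, and the one-parameter family $\Bbbk\langle 1,g+\mu x\rangle$ with $\mu\in\Bbbk^\times$. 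One then checks that $\Bbbk[g]\cong\Bbbk\times\Bbbk$ and $\Bbbk\langle 1,x\rangle\cong\Bbbk[t]/(t^2)$ are inequivalent even as algebras, whereas every $\Bbbk\langle 1,g+\mu x\rangle$ is Morita equivalent, as an $H_4$-comodule algebra, to $\Bbbk[g]$ (via an explicit equivariant bimodule as in Proposition~\ref{prop:AM-S}); hence there are exactly two proper coideal subalgebras up to the equivalence relevant here, giving Proposition~\ref{prop:H4-K}(a).

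For (b), recall that a $K$-matrix of a left $H_4$-comodule algebra $B$ relative to $(H_4,R_\lambda)$ is, $H_4$ being finite-dimensional, an invertible $\mathcal{K}\in B\otimes H_4$ satisfying Kolb's axioms \cite{Kolb20} (twisted $B$-centrality together with the reflection-equation/hexagon identities); in the present coordinates these are the identities of Lemma~\ref{lem:cece} with \eqref{eq:formula-c} and \eqref{eq:formula-e}, cf.\ \cite{LWY}. For each coideal subalgebra from (a) and each $\lambda$, I would expand $\mathcal{K}$ over a basis of $B\otimes H_4$, substitute $R_\lambda$ from \eqref{eq:Rlambda}, and solve the resulting bilinear system; the bookkeeping is routine but bulky. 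It should come out that $\Bbbk[g]$ and $H_4$ admit only the trivial $\mathcal{K}$ for every $\lambda$, that $\Bbbk\langle 1,x\rangle$ acquires nontrivial $K$-matrices exactly when $\lambda=0$, and that $\Bbbk$ has nontrivial $K$-matrices as well. In parallel I would record, by computing $R_{\lambda,21}R_\lambda$ (equivalently the Drinfeld element), that $(H_4,R_\lambda)$ is triangular precisely when $\lambda=0$; this is what controls whether the comodule algebras in play are merely quasitriangular or actually triangular, hence whether the Type~D invariants are available.

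For (c), I would list the braided $H_4$-comodule algebras $(B,\mathcal{K})$ produced by (a)--(b) and proceed in two directions. To prove inequivalences, apply Theorem~\ref{thm:main-intro} — whose $H$-simplicity and augmentation hypotheses hold for coideal subalgebras (Section~\ref{sec:Hsimplicity}) — attaching to each pair the Type~BC representations of Proposition~\ref{prop:typeBC} and, when $\lambda=0$ (so the pairs are triangular), also the Type~D representations of Proposition~\ref{prop:typeD}; computing these for small $n$ on the regular modules $(H_4)_\textnormal{reg}$ and $B_\textnormal{reg}$ separates the classes that should be distinct. To prove the remaining coincidences, use Propositions~\ref{prop:AM-S} and~\ref{prop:equivalence} and exhibit the requisite equivariant bimodules. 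Tallying gives six classes when $\lambda=0$ and four when $\lambda\neq 0$; the discrepancy is accounted for by the $K$-matrices that exist only at $\lambda=0$ in part~(b) together with the finer Type~D separation available exactly in the triangular case.

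The principal obstacle is the completeness half of (c): the braided-Morita invariants of Theorem~\ref{thm:main-intro} are only necessary conditions, so certifying that the final list has no further coincidences forces an honest computation of the braid-group representations on $(H_4)_\textnormal{reg}$ and $B_\textnormal{reg}$ for each candidate pair and a verification — e.g.\ via characters or traces of suitable braid words — that they are genuinely inequivalent. Secondary difficulties are the volume of the reflection-equation computation in (b), which must be repeated for each coideal subalgebra and each $\lambda$, and the care needed in (a) to pin down the equivariant bimodule that collapses the family $\Bbbk\langle 1,g+\mu x\rangle$ onto $\Bbbk[g]$.
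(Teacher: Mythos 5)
Your overall architecture matches the paper's (classify the coideal subalgebras, classify $K$-matrices by brute force from the axioms \eqref{eq:K-matrix}, then separate braided Morita classes using dimension, traces of braid-group representations, and elementary Morita theory), but several of your concrete claims are wrong under the paper's conventions, and one step of your separation strategy provably cannot work. First, with the paper's coproduct $\Delta(x)=x\otimes g+1\otimes x$, the subspace $\Bbbk 1\oplus\Bbbk x$ is \emph{not} a left coideal ($\Delta(x)$ has right tensor factor $g$); the second proper coideal subalgebra is $\Bbbk 1\oplus\Bbbk gx$, since $\Delta(gx)=gx\otimes 1+g\otimes gx$. Likewise your family $\Bbbk\langle 1,g+\mu x\rangle$ is not a family of left coideals; the family your computation would actually produce is $\Bbbk 1\oplus\Bbbk(g+\mu gx)$, and in any case part (a) is a statement about the subalgebras themselves, so ``collapsing a family up to Morita equivalence'' does not prove the stated count (the paper simply asserts the list $\Bbbk$, $\Bbbk 1\oplus\Bbbk g$, $\Bbbk 1\oplus\Bbbk gx$, $H_4$ and gives no argument for (a)). Second, in (b) you have the conclusion backwards: by Proposition~\ref{prop:H4-K} it is $\Bbbk$ and the \emph{group-like} coideal $\Bbbk 1\oplus\Bbbk g$ that acquire the extra $K$-matrix $g\otimes 1$ precisely when $\lambda=0$, while $\Bbbk 1\oplus\Bbbk gx$ and $H_4$ admit only $1\otimes 1$ for every $\lambda$. (Also, per the paper's Section~\ref{sec:quasitrian-ca}, $K$ lives in $H\otimes B$, not $B\otimes H$.) This reversal propagates into your tally for (c).

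For (c), the genuine gap is your reliance on the braid-group invariants to ``separate the classes that should be distinct.'' When $K=1\otimes 1$, the map $e_{H,B}$ of \eqref{eq:formula-e} is the identity, so $\rho_n^{H_{\textnormal{reg}},B_{\textnormal{reg}}}$ sends $t$ to the identity and is isomorphic to $\dim_\Bbbk B$ copies of the type A representation of $H^{\otimes n}$; hence for two coideal subalgebras of the same dimension both carrying the trivial $K$-matrix the representations are literally isomorphic, and no braid word will distinguish $(\Bbbk 1\oplus\Bbbk g,1\otimes 1)$ from $(\Bbbk 1\oplus\Bbbk gx,1\otimes 1)$. The paper separates this pair by ordinary Morita theory: $\Bbbk 1\oplus\Bbbk g$ is semisimple and $\Bbbk 1\oplus\Bbbk gx\cong\Bbbk[t]/(t^2)$ is not. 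The remaining separations in the paper are exactly the ones you anticipate: Corollary~\ref{cor:csa-dim-same} handles different dimensions, and the trace of $\rho_2^{(H_4)_{\textnormal{reg}},B_{\textnormal{reg}}}(t)$ from Proposition~\ref{prop:typeBC} (equal to $16\dim_\Bbbk B$ for $K=1\otimes 1$ and to $0$ for $K=g\otimes 1$) distinguishes the two $K$-matrices on a fixed $B$. Note also that the paper makes no use of the type D representations in Theorem~\ref{thm:H4}, and the discrepancy between six and four classes is due entirely to the extra $K$-matrix existing only at $\lambda=0$, not to any ``finer type D separation''; since every class is a singleton, no equivariant bimodules need to be exhibited either.
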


We also pose a few questions to highlight directions for further investigation. See Question~\ref{ques:hyp} (on the hypotheses of Theorem~\ref{thm:main-intro}), Question~\ref{ques:RHA} (on a notion of categorical Morita equivalence for comodule algebras), and Question~\ref{ques:othertypes} (on expanding Theorem~\ref{thm:main-intro} to other Coxeter types). More classification results akin to Theorems~\ref{thm:kG-intro} and~\ref{thm:H4-intro} are needed in the literature as well.

\medskip

\noindent{\bf Organization of the article.} We introduce preliminary material on braid groups, braided monoidal/module categories, and quasitriangular Hopf algebras/comodule algebras in Section~\ref{sec:prelim}. 
Our main results, including 
Theorem~\ref{thm:main-intro}, are established in Section~\ref{sec:results}. Then, the classification results, Theorems~\ref{thm:kG-intro} and~\ref{thm:H4-intro}, are presented in Section~\ref{sec:examples}. 

\smallskip

All categories and functors between them are assumed to be $\Bbbk$-linear, for $\Bbbk$ an algebraically closed field of characteristic zero. Linear algebraic structures are over $\Bbbk$; in this case, $\otimes := \otimes_\Bbbk$. In general, $\otimes$ denotes the bifunctor of a monoidal category, but there will be no conflict with such notation.


\section{Preliminary material}\label{sec:prelim}

In this section, we provide background material on braid groups attached to Coxeter matrices [Section~\ref{sec:braidgrp}], on braided monoidal categories and braided module categories [Section~\ref{sec:braidmm}], and on quasitriangular Hopf algebras [Section~\ref{sec:quasitrian}], and quasitriangular comodule algebras/coideal subalgebras [Section~\ref{sec:quasitrian-ca}]. Properties of coideal subalgebras are also recalled [Section~\ref{sec:Hsimplicity}].


\subsection{Braid groups attached to Coxeter matrices} 
\label{sec:braidgrp}
See  \cite[$\S\S$6.6.1,~6.6.2]{KasselTuraev} for details. 

\smallskip

A {\it Coxeter matrix} is a symmetric matrix $(a_{i,j}) \in \text{Mat}_{n}(\mathbb{N})$, where $n \in \mathbb{N}_{\geq 1}$, $a_{i,i} = 1$ for all $i$, and $a_{i,j} \in [2,\infty]$ for all $i \neq j$. Those of {\it finite type}  are divided into the following types: A$_n$, BC$_n$, D$_n$, E$_6$, E$_7$, E$_8$, F$_4$, G$_2$, H$_3$, H$_4$, and I$_2(m)$; each type is attached to a unique {\it Coxeter graph} as visualized in \cite[Table~6.1]{KasselTuraev}. Each Coxeter matrix $(a_{i,j}) \in \text{Mat}_{n}(\mathbb{N})$ of type X$_n$ is also associated to a unique group generated by $\theta_1, \dots, \theta_n$, subject to the relations:
\[
\underbrace{\theta_i \theta_j \theta_i \dots}_{a_{ij}\, \text{factors}} \; = \; \underbrace{\theta_j \theta_i \theta_i \dots}_{a_{ij}\, \text{factors}}.
\]
We refer to this group as the {\it braid group of Coxeter type \textnormal{X}} (or {\it type $\textnormal{X}_n$}).

\smallskip

We present the matrices, graphs, and braid groups of Coxeter types A, BC, and D next.

\subsubsection{Coxeter type \textnormal{A}}
The Coxeter matrix and graph of type A$_n$ are given below.
\smallskip
\begin{center}
\scalebox{0.8}{
$
\begingroup
\setlength\arraycolsep{-0.5pt}
\begin{pmatrix}
\; 1\;  & 3 & 2 & \dots &\; \dots & 2\\[-.6pc]
3 & 1 & 3 & \ddots &  & \vdots\\[-.6pc]
2 & 3 & 1 & \ddots & \ddots & \vdots\\[-.6pc]
\vdots & \ddots & \ddots  & \ddots & \ddots  & 2\\[-.6pc]
\vdots &  & \ddots & \ddots  & \ddots & 3\\[-.2pc]
2 & \dots\; & \dots & 2 & 3 & \; 1\; 
\end{pmatrix}
\endgroup
$
}
\hspace{0.5in}
\tikzset{every picture/.style={line width=0.75pt}} 
\begin{tikzpicture}[x=0.75pt,y=0.75pt,yscale=-1,xscale=1]
\draw   (11.25,23.76) .. controls (11.25,21.96) and (12.7,20.51) .. (14.49,20.51) .. controls (16.29,20.51) and (17.74,21.96) .. (17.74,23.76) .. controls (17.74,25.55) and (16.29,27) .. (14.49,27) .. controls (12.7,27) and (11.25,25.55) .. (11.25,23.76) -- cycle ;
\draw   (161.25,23.51) .. controls (161.25,21.71) and (162.7,20.26) .. (164.49,20.26) .. controls (166.29,20.26) and (167.74,21.71) .. (167.74,23.51) .. controls (167.74,25.3) and (166.29,26.75) .. (164.49,26.75) .. controls (162.7,26.75) and (161.25,25.3) .. (161.25,23.51) -- cycle ;
\draw    (17.74,23.76) -- (161.25,23.76) ;
\draw  [fill={rgb, 255:red, 255; green, 255; blue, 255 }  ,fill opacity=1 ] (41,23.76) .. controls (41,21.96) and (42.45,20.51) .. (44.24,20.51) .. controls (46.04,20.51) and (47.49,21.96) .. (47.49,23.76) .. controls (47.49,25.55) and (46.04,27) .. (44.24,27) .. controls (42.45,27) and (41,25.55) .. (41,23.76) -- cycle ;
\draw  [fill={rgb, 255:red, 255; green, 255; blue, 255 }  ,fill opacity=1 ] (71.25,23.51) .. controls (71.25,21.71) and (72.7,20.26) .. (74.49,20.26) .. controls (76.29,20.26) and (77.74,21.71) .. (77.74,23.51) .. controls (77.74,25.3) and (76.29,26.75) .. (74.49,26.75) .. controls (72.7,26.75) and (71.25,25.3) .. (71.25,23.51) -- cycle ;
\draw  [fill={rgb, 255:red, 255; green, 255; blue, 255 }  ,fill opacity=1 ] (101.25,23.76) .. controls (101.25,21.96) and (102.7,20.51) .. (104.49,20.51) .. controls (106.29,20.51) and (107.74,21.96) .. (107.74,23.76) .. controls (107.74,25.55) and (106.29,27) .. (104.49,27) .. controls (102.7,27) and (101.25,25.55) .. (101.25,23.76) -- cycle ;
\draw  [color={rgb, 255:red, 255; green, 255; blue, 255 }  ,draw opacity=1 ][fill={rgb, 255:red, 255; green, 255; blue, 255 }  ,fill opacity=1 ] (117.94,19.28) -- (150.94,19.28) -- (150.94,29.78) -- (117.94,29.78) -- cycle ;
\draw (120.4, 21.4) node [anchor=north west][inner sep=0.75pt]   [align=left] {. . .};
\draw (10.75,31.96) node [anchor=north west][inner sep=0.75pt]  [font=\scriptsize] [align=left] {1 \hspace{0.165in} 2 \hspace{0.175in} 3 \hspace{0.175in} 4 \hspace{0.48in} $n$};
\end{tikzpicture}
\end{center}

\smallskip

\noindent The corresponding braid group here is:
\[
\textnormal{Br}_n^{\textnormal{A}} :=
\Big\langle \sigma_1, \dots, \sigma_n ~~\Big| \hspace{-.1in}
\begin{array}{cl}
\hspace{0.15in} \sigma_i \sigma_{i+1} \sigma_i = \sigma_{i+1} \sigma_i \sigma_{i+1} &\text{for $i=1, \dots, n-1$}\,;\\[0.03pc]
\sigma_i \sigma_j =  \sigma_j \sigma_i &\text{for $|i-j| \geq 2$}  
\end{array}
\Big\rangle.
\]

\subsubsection{Coxeter type \textnormal{BC}}
The Coxeter matrix and graph of type BC$_n$ are given below.
\smallskip
\begin{center}
\scalebox{0.8}{
$
\begingroup
\setlength\arraycolsep{-0.5pt}
\begin{pmatrix}
\; 1\;  & 3 & 2 & \dots & \;\dots & \;\dots & 2\\[-.6pc]
3 & 1 & 3 & \ddots &  & & \vdots\\[-.6pc]
2 & 3 & 1 & \ddots & \ddots & & \vdots\\[-.6pc]
\vdots & \ddots & \ddots  & \ddots & \ddots & \ddots  & \vdots\\[-.6pc]
\vdots &  & \ddots & \ddots  & 1 &3 & 2\\[-.6pc]
\vdots & \dots\; & \dots  & \ddots & 3 & 1 & \; 4\;
\\[-.2pc]
2 & \dots \; & \dots\; & \dots & 2 & 4 & \; 1\; 
\end{pmatrix}
\endgroup
$
}
\hspace{0.5in}
\tikzset{every picture/.style={line width=0.75pt}} 
\begin{tikzpicture}[x=0.75pt,y=0.75pt,yscale=-1,xscale=1]
\draw   (11.25,23.76) .. controls (11.25,21.96) and (12.7,20.51) .. (14.49,20.51) .. controls (16.29,20.51) and (17.74,21.96) .. (17.74,23.76) .. controls (17.74,25.55) and (16.29,27) .. (14.49,27) .. controls (12.7,27) and (11.25,25.55) .. (11.25,23.76) -- cycle ;
\draw   (161.25,23.51) .. controls (161.25,21.71) and (162.7,20.26) .. (164.49,20.26) .. controls (166.29,20.26) and (167.74,21.71) .. (167.74,23.51) .. controls (167.74,25.3) and (166.29,26.75) .. (164.49,26.75) .. controls (162.7,26.75) and (161.25,25.3) .. (161.25,23.51) -- cycle ;
\draw    (17.74,23.76) -- (161.25,23.76) ;
\draw  [fill={rgb, 255:red, 255; green, 255; blue, 255 }  ,fill opacity=1 ] (41,23.76) .. controls (41,21.96) and (42.45,20.51) .. (44.24,20.51) .. controls (46.04,20.51) and (47.49,21.96) .. (47.49,23.76) .. controls (47.49,25.55) and (46.04,27) .. (44.24,27) .. controls (42.45,27) and (41,25.55) .. (41,23.76) -- cycle ;
\draw  [fill={rgb, 255:red, 255; green, 255; blue, 255 }  ,fill opacity=1 ] (71.25,23.51) .. controls (71.25,21.71) and (72.7,20.26) .. (74.49,20.26) .. controls (76.29,20.26) and (77.74,21.71) .. (77.74,23.51) .. controls (77.74,25.3) and (76.29,26.75) .. (74.49,26.75) .. controls (72.7,26.75) and (71.25,25.3) .. (71.25,23.51) -- cycle ;
\draw  [fill={rgb, 255:red, 255; green, 255; blue, 255 }  ,fill opacity=1 ] (101.25,23.76) .. controls (101.25,21.96) and (102.7,20.51) .. (104.49,20.51) .. controls (106.29,20.51) and (107.74,21.96) .. (107.74,23.76) .. controls (107.74,25.55) and (106.29,27) .. (104.49,27) .. controls (102.7,27) and (101.25,25.55) .. (101.25,23.76) -- cycle ;
\draw  [color={rgb, 255:red, 255; green, 255; blue, 255 }  ,draw opacity=1 ][fill={rgb, 255:red, 255; green, 255; blue, 255 }  ,fill opacity=1 ] (117.94,19.28) -- (150.94,19.28) -- (150.94,29.78) -- (117.94,29.78) -- cycle ;
\draw   (190.92,23.51) .. controls (190.92,21.71) and (192.37,20.26) .. (194.16,20.26) .. controls (195.95,20.26) and (197.4,21.71) .. (197.4,23.51) .. controls (197.4,25.3) and (195.95,26.75) .. (194.16,26.75) .. controls (192.37,26.75) and (190.92,25.3) .. (190.92,23.51) -- cycle ;
\draw    (167.12,23.76) -- (192.12,23.76) ;
\draw (120.4, 21.4) node [anchor=north west][inner sep=0.75pt]  [align=left] {. . .};
\draw (176, 9) node [anchor=north west][inner sep=0.75pt]  [font=\scriptsize]  [align=left] {4};
\draw (10.75,31.96) node [anchor=north west][inner sep=0.75pt]  [font=\scriptsize] [align=left] {1 \hspace{0.165in} 2 \hspace{0.175in} 3 \hspace{0.175in} 4 \hspace{0.39in} $n \hspace{-0.02in} - \hspace{-0.02in} 1$ \hspace{0.09in} $n$};
\end{tikzpicture}
\end{center}

\smallskip

\noindent The corresponding braid group here is:
\[
\textnormal{Br}_n^{\textnormal{BC}} :=
\left\langle \sigma_1, \dots, \sigma_{n-1}, t ~~\Bigg| \hspace{-.1in}
\begin{array}{cl}
\hspace{0.3in} \sigma_i \sigma_{i+1} \sigma_i = \sigma_{i+1} \sigma_i \sigma_{i+1} &\text{for $i=1, \dots, n-2$}\,;\\[-0.15pc]
\hspace{0.15in} \sigma_i \sigma_j =  \sigma_j \sigma_i &\text{for $|i-j| \geq 2$}\,;\\[-0.15pc]
\hspace{0.15in} \sigma_i \, t = t  \, \sigma_i  &\text{for $i=1, \dots, n-2$}\,;\\[-0.15pc]
\hspace{0.15in} \sigma_{n-1} \, t \, \sigma_{n-1} \, t = t  \, \sigma_{n-1} \, t \, \sigma_{n-1}
\end{array}
\right\rangle.
\]

\subsubsection{Coxeter type \textnormal{D}}
The Coxeter matrix and graph of type D$_n$ are given below.
\smallskip
\begin{center}
\scalebox{0.8}{
$
\begingroup
\setlength\arraycolsep{-0.5pt}
\begin{pmatrix}
\; 1\;  & 3 & 2 & 2 & \; \dots & \;\dots & \;\dots & \;\dots & 2\\[-.6pc]
3 & 1 & 3 & 2& \ddots & &  & & \vdots\\[-.6pc]
2 & 3 & 1 & 3& \ddots & \ddots & & & \vdots\\[-.6pc]
2 & 2 & 3  & 1& \ddots & \ddots & \ddots &  & \vdots\\[-.6pc]
\vdots & \ddots & \ddots\; & \ddots & \ddots & \ddots & \ddots & \ddots& \vdots\\[-.6pc]
\vdots &  &\ddots & \ddots & \ddots & 1  & 3 &2 & 2\\[-.6pc]
\vdots &  & & \ddots & \ddots & 3  & 1 &3 & 3\\[-.6pc]
\vdots &  &  && \ddots & 2 & 3 & 1 & \; 2\;\\[-.2pc]
2 & \dots \; & \dots\; & \dots\; & \dots & 2 & 3 & 2 & \; 1\; 
\end{pmatrix}
\endgroup
$
}
\hspace{0.5in}
\tikzset{every picture/.style={line width=0.75pt}} 
\begin{tikzpicture}[x=0.75pt,y=0.75pt,yscale=-1,xscale=1]
\draw   (11.25,23.76) .. controls (11.25,21.96) and (12.7,20.51) .. (14.49,20.51) .. controls (16.29,20.51) and (17.74,21.96) .. (17.74,23.76) .. controls (17.74,25.55) and (16.29,27) .. (14.49,27) .. controls (12.7,27) and (11.25,25.55) .. (11.25,23.76) -- cycle ;
\draw   (161.25,23.51) .. controls (161.25,21.71) and (162.7,20.26) .. (164.49,20.26) .. controls (166.29,20.26) and (167.74,21.71) .. (167.74,23.51) .. controls (167.74,25.3) and (166.29,26.75) .. (164.49,26.75) .. controls (162.7,26.75) and (161.25,25.3) .. (161.25,23.51) -- cycle ;
\draw    (17.74,23.76) -- (161.25,23.51) ;
\draw  [fill={rgb, 255:red, 255; green, 255; blue, 255 }  ,fill opacity=1 ] (41,23.76) .. controls (41,21.96) and (42.45,20.51) .. (44.24,20.51) .. controls (46.04,20.51) and (47.49,21.96) .. (47.49,23.76) .. controls (47.49,25.55) and (46.04,27) .. (44.24,27) .. controls (42.45,27) and (41,25.55) .. (41,23.76) -- cycle ;
\draw  [fill={rgb, 255:red, 255; green, 255; blue, 255 }  ,fill opacity=1 ] (71.25,23.51) .. controls (71.25,21.71) and (72.7,20.26) .. (74.49,20.26) .. controls (76.29,20.26) and (77.74,21.71) .. (77.74,23.51) .. controls (77.74,25.3) and (76.29,26.75) .. (74.49,26.75) .. controls (72.7,26.75) and (71.25,25.3) .. (71.25,23.51) -- cycle ;
\draw  [fill={rgb, 255:red, 255; green, 255; blue, 255 }  ,fill opacity=1 ] (101.25,23.76) .. controls (101.25,21.96) and (102.7,20.51) .. (104.49,20.51) .. controls (106.29,20.51) and (107.74,21.96) .. (107.74,23.76) .. controls (107.74,25.55) and (106.29,27) .. (104.49,27) .. controls (102.7,27) and (101.25,25.55) .. (101.25,23.76) -- cycle ;
\draw  [color={rgb, 255:red, 255; green, 255; blue, 255 }  ,draw opacity=1 ][fill={rgb, 255:red, 255; green, 255; blue, 255 }  ,fill opacity=1 ] (117.94,19.28) -- (150.94,19.28) -- (150.94,29.78) -- (117.94,29.78) -- cycle ;
\draw   (190.58,41.51) .. controls (190.58,39.71) and (192.04,38.26) .. (193.83,38.26) .. controls (195.62,38.26) and (197.07,39.71) .. (197.07,41.51) .. controls (197.07,43.3) and (195.62,44.75) .. (193.83,44.75) .. controls (192.04,44.75) and (190.58,43.3) .. (190.58,41.51) -- cycle ;
\draw    (167.74,23.51) -- (190.5,6.75) ;
\draw    (167.74,23.51) -- (190.58,41.51) ;
\draw   (190.5,6.75) .. controls (190.5,4.96) and (191.95,3.5) .. (193.74,3.5) .. controls (195.53,3.5) and (196.99,4.96) .. (196.99,6.75) .. controls (196.99,8.54) and (195.53,9.99) .. (193.74,9.99) .. controls (191.95,9.99) and (190.5,8.54) .. (190.5,6.75) -- cycle ;

\draw (120.4, 21.4) node [anchor=north west][inner sep=0.75pt]   [align=left] {. . .};
\draw (10.75,31.96) node [anchor=north west][inner sep=0.75pt]  [font=\scriptsize] [align=left] {1 \hspace{0.165in} 2 \hspace{0.175in} 3 \hspace{0.175in} 4 \hspace{0.39in} $n \hspace{-0.02in} - \hspace{-0.02in} 2$};
\draw (182,12.32) node [anchor=north west][inner sep=0.75pt]  [font=\scriptsize] [align=left] {$n \hspace{-0.02in} - \hspace{-0.02in}1$};
\draw (189,50) node [anchor=north west][inner sep=0.75pt]  [font=\scriptsize] [align=left] {$n$};
\end{tikzpicture}
\end{center}

\smallskip

\noindent The corresponding braid group here is:
\[
\textnormal{Br}_n^{\textnormal{D}} :=
\Bigg\langle \sigma_1, \dots, \sigma_{n-1}, t ~~\Bigg| \hspace{-.1in}
\begin{array}{cl}
\hspace{0.5in} \sigma_i \sigma_{i+1} \sigma_i = \sigma_{i+1} \sigma_i \sigma_{i+1} &\text{for $i=1, \dots, n-2$}\, ;\\[-0.15pc]
\hspace{0.37in} \sigma_i \sigma_j =  \sigma_j \sigma_i &\text{for $|i-j| \geq 2$}\, ;\\[0pc]
 \hspace{0.37in} \sigma_i \, t = t  \, \sigma_i  &\text{for $i=1, \dots, n-3,\, n-1$}\,;\\[-0.15pc]
\hspace{0.1in} \sigma_{n-2} \, t \, \sigma_{n-2}  = t  \, \sigma_{n-2} \, t  
\end{array}
\Bigg\rangle.
\]


\subsection{Braided monoidal categories and braided module categories} \label{sec:braidmm} 
We refer the reader to \cite[Chapter~3]{Walton2024}, \cite[\S 8.1]{EGNO}, and \cite[\S 3.5]{LWY} for further details.

\smallskip

A {\it monoidal category} consists of a category~$\mathcal{C}$ equipped with a bifunctor $\otimes:  \mathcal{C} \times \mathcal{C} \to \mathcal{C}$ and an object $\one \in \mathcal{C}$, along with natural isomorphisms that serve as associativity and unitality constraints  satisfying pentagon and triangle axioms. Here, we will assume by way of the Mac Lane's strictness theorem that monoidal categories are {\it strict}---that is, the components of the associativity and unitality constraints are identity morphisms.

\smallskip

A {\it (strong) monoidal functor}
between monoidal categories $(\mathcal{C}, \otimes, \one)$ and $(\mathcal{C}', \otimes', \one')$ is a functor $F: \mathcal{C} \to \mathcal{C}'$ equipped with a natural isomorphism $F^{(2)}:=\{F^{(2)}_{X,Y}:   F(X) \otimes' F(Y) \equivto F(X \otimes Y)\}_{X,Y \in \mathcal{C}}$, and an isomorphism $F^{(0)} : \one'  \equivto F(\one)$  in $\mathcal{C}'$, satisfying associativity and unitality constraints. 

\smallskip

For a monoidal category $(\mathcal{C}, \otimes, \one)$, a {\it left $\mathcal{C}$-module category}  is a category $\mathcal{M}$ equipped with
a bifunctor $\act\colon \mathcal{C} \times \mathcal{M} \to \mathcal{M}$ (an {\it action}),  with natural transformations that serve as associativity and unitality constraints  satisfying pentagon and triangle axioms. Here, we will also assume by way of a version of Mac Lane's strictness theorem that module categories are {\it strict}, that is, the components of the associativity and unitality constraints are identity morphisms.

\smallskip

A {\it $\mathcal{C}$-module functor} from $(\mathcal{M}, \act)$ to $(\mathcal{M}', \act')$ is a functor $F: \mathcal{M} \to \mathcal{M}'$ equipped with a natural isomorphism \! $s \!:= \! {\{s_{X,M}\colon \!F(X \! \act \! M) \!\equivto \! X\! \act'\! F(M)\}}_{X \in \mathcal{C}, M \in \mathcal{M}}$ \! satisfying pentagon and triangle~axioms.

\smallskip

A {\it braided monoidal category} is a monoidal category $(\mathcal{C}, \otimes, \one)$ equipped with a family of natural isomorphisms $c:=\{c_{X, Y}: X\otimes Y\equivto Y\otimes X\}_{X, Y\in \mathcal{C}}$, called a {\it braiding}, satisfying the hexagon axioms: 
\begin{eqnarray}
c_{X, Y\otimes Z}& = (\ide_Y\otimes c_{X, Z})(c_{X, Y}\otimes\ide_Z), \label{eq:braid1} \\
c_{X\otimes Y, Z}& = (c_{X, Z}\otimes \ide_Y)(\ide_X\otimes c_{Y, Z}),\label{eq:braid2}
\end{eqnarray} for all $X, Y, Z\in\mathcal{C}$. Further, a braided monoidal category $(\mathcal{C}, \otimes, \one, c)$ is said to be {\it symmetric} if $c_{X,Y} = c^{-1}_{Y,X}$, for all $X, Y \in \mathcal{C}$. 

\smallskip

A {\it braided monoidal functor} between braided monoidal categories $(\mathcal{C}, \otimes, \one, c)$ and $(\mathcal{C}',  \otimes', \one', c')$ is a monoidal functor $(F, F^{(2)},F^{(0)}): \mathcal{C}\to \mathcal{C}'$ such that 
\[
F^{(2)}_{Y, X} \, c'_{F(X), F(Y)} \; = \; F(c_{X, Y}) \, F^{(2)}_{X, Y},
\]
 for all $X, Y\in \mathcal{C}$. 
A {\it braided monoidal equivalence} of braided monoidal categories is a braided
monoidal functor which is also an equivalence of categories.

\smallskip

Let $\mathcal{C}$ be a braided monoidal category with braiding $c$. A left $\mathcal{C}$-module category $(\mathcal{M}, \act)$ is called {\it braided} if it is equipped with a natural isomorphism $e:=\{e_{X, M}: X\act M\equivto X\act M\}_{X\in \mathcal{C}, M\in \mathcal{M}}$, such that the following identities hold for all $X, Y\in \mathcal{C}$, $M \in \mathcal{M}$:
\begin{eqnarray} 
  e_{X\otimes Y, M} & = & (\ide_X \otimes e_{Y, M})(c_{Y, X}\act \ide_M)(\ide_Y \otimes e_{X, M})(c^{-1}_{Y, X}\act \ide_M), \label{eq:brmod1}\\ 
   e_{X, Y\act M}& = &  (c_{Y, X}\act \ide_M) (\ide_Y \otimes e_{X, M})(c_{X, Y}\act \ide_M). \label{eq:brmod2}
\end{eqnarray} 
We call a braided left $\mathcal{C}$-module category $\mathcal{M}$ {\it symmetric} if $e_{X, M} = e^{-1}_{X, M}$, for all $X\in\mathcal{C}$, $M\in\mathcal{M}$.

\smallskip

A {\it braided $\mathcal{C}$-module functor} between braided left $\mathcal{C}$-module categories $(\mathcal{M}, \act, e)$ and $(\mathcal{M}', \act', e')$ is a left $\mathcal{C}$-module functor $(F, s): (\mathcal{M}, \act)\to (\mathcal{M}', \act')$ such that 
\begin{equation} \label{eq:brmodfun}
e'_{X, F(M)} \, s_{X, M} \; = \; s_{X, M} \, F(e_{X, M}),
\end{equation}
 for all $X\in \mathcal{C}$, $M\in \mathcal{M}$. An {\it equivalence of braided left $\mathcal{C}$-module categories} is a braided module functor which is also an equivalence of the underlying categories.


\subsection{Quasitriangular Hopf algebras} 
\label{sec:quasitrian} See \cite[$\S$8.3]{EGNO} for details about the material below.  

\smallskip

Recall that a Hopf algebra $(H, \, \Delta \colon H \to H \otimes H,  \, \varepsilon \colon H \to \Bbbk,  \, S \colon H \to H)$ has a monoidal category of left $H$-modules, $\lmod{H}$, with $\otimes$ and $\one$ determined by $\Delta$ and $\varepsilon$, respectively.

\smallskip

A Hopf algebra $H$ is {\it quasitriangular} if there exists an invertible element $R\in H\otimes H$, called an {\it $R$-matrix}, such that, for all $h \in H$,
\begin{equation}
  \text{(i)} \,  (\Delta\otimes\ide_H)(R)= R_{13}R_{23}, \quad \; \;  \text{(ii)} \,  (\ide_H\otimes \Delta)(R) = R_{13}R_{12},   \quad \; \; \text{(iii)} \, R \Delta(h) =\Delta^{\operatorname{op}}(h)R.
\end{equation} 
Further, a quasitriangular Hopf algebra $(H,R)$ is said to be {\it triangular} if $R_{21} = R^{-1}$. 

\smallskip

One  quasitriangular Hopf algebra is the {\it Drinfeld double} $D(L)$ of a finite-dimensional Hopf algebra~$L$. As a coalgebra, $D(L) = (L^*)^{\text{op}} \otimes L$, and
 $\ell \xi = \langle \xi_{(1)}, S(\ell_{(1)}) \rangle \langle \xi_{(3)}, \ell_{(2)} \rangle \xi_{(2)} \ell_{(2)}$ for $\ell \in L$ and $\xi \in L^*$. Here, we employ the Sweedler notation:
\[
\Delta(\ell):= \ell_{(1)} \otimes \ell_{(2)} \in H \otimes H.
\] 
Also, $D(L)$ has an $R$-matrix $\sum_d 1_{L^*} \otimes \ell_d \otimes \xi_d \otimes 1_L$, for $\{\ell_d, \xi_d\}$ a dual basis of $L$.
In fact, $\lmod{D(L)}$ is equivalent to the {\it Drinfeld center} $\mathcal{Z}(\lmod{L})$ as braided monoidal categories.

\smallskip

Given a Hopf algebra $H$, there is a bijection between braidings of the monoidal category of left $H$-modules and $R$-matrices of $H$. More precisely, for all $X, Y \in \lmod{H}$, 
\begin{equation} \label{eq:formula-c}
c_{X,Y}: X\otimes Y\to Y\otimes X, \quad x\otimes y \mapsto \textstyle \sum_{i} (R^i \cdot y) \otimes (R_i\cdot x),
\end{equation}
gives a braiding of $\lmod{H}$ if and only if $R=\sum_i R_i\otimes R^i \in H\otimes H$ is an $R$-matrix of $H$. This braiding of $\lmod{H}$ is symmetric if and only if $(H,R)$ is triangular.

\smallskip

We say that quasitriangular Hopf algebras $(H,R)$ and $(H',R')$ are {\it braided Morita equivalent} if $\lmod{H}$ and $\lmod{H'}$ are equivalent as braided monoidal categories.


\subsection{Quasitriangular comodule algebras and quasitriangular coideal subalgebras} 
\label{sec:quasitrian-ca} For details of the material below, see \cite[\S\S2.2, 2.3]{Kolb20} and \cite[$\S$3.7]{LWY}. 

\smallskip

Recall that for a Hopf algebra $H$, the category of modules over a left $H$-comodule algebra $(B, \, \delta \colon B \to H \otimes B)$ is a left $(\lmod{H})$-module category, $\lmod{B}$, with $\act$  determined by $\delta$.

\smallskip

Let $H$ be a quasitriangular Hopf algebra with  $R$-matrix $R=\sum_i R_i\otimes R^i \in H\otimes H$. A left $H$-comodule algebra $(B,\delta \colon B \to H \otimes B)$ is {\it quasitriangular} if there exists an invertible element $K \in H \otimes B$, called a {\it $K$-matrix}, such that, for all $b \in B$,
\begin{equation}\label{eq:K-matrix}
  \text{(i)} \,  (\Delta \otimes \ide_B)(K)= K_{23}R_{21}K_{13}R_{21}^{-1}, \quad \;   \text{(ii)} \,  (\ide_H\otimes \delta)(K) = R_{21}K_{13}R_{12},   \quad \;  \text{(iii)} \, K \delta(b) =\delta(b)K.
\end{equation} 
We say that a quasitriangular left $H$-comodule algebra $(B,K)$ is  {\it triangular} if $K = K^{-1}$. 

\smallskip

In fact, there is a simpler criteria for a coideal subalgebra to be quasitriangular.
Let $(H,R)$ be a quasitriangular Hopf algebra.  A left coideal subalgebra $(B,\Delta|_B)$ is said to be {\it quasitriangular} if there exists an invertible element $\widehat{K} \in H$, such that
\begin{equation}\label{eq:K-matrix-coideal-sub}
  (\widehat{\text{i}}) \,  \Delta(\widehat{K})= \widehat{K}_2 R_{21}\widehat{K}_1 R_{21}^{-1}, \quad \quad   (\widehat{\text{ii}}) \,   R_{21}\widehat{K}_1R_{12}\in H\otimes B,   \quad \quad  (\widehat{\text{iii}}) \, \widehat{K} b =b\widehat{K},\, \forall b\in B.
\end{equation} 

\medskip

The next lemma can be proved in a similar way as \cite[Lemma 2.9]{Kolb20}. 

\begin{lemma}\label{lemma:criteria-K-matrix} 
    Let $(H,R)$ be a quasitriangular Hopf algebra.
    \begin{enumerate}[\upshape (a)]
        \item If $B$ is a left coideal subalgebra of $H$ that is quasitriangular as an $H$-comodule algebra with $K$-matrix $K\in H\otimes B$, then $B$ is quasitriangular as a left coideal subalgebra with $\widehat{K}=(\ide\otimes\varepsilon)(K)\in H$. In this case, $K=R_{21}\widehat{K}_1R_{12}$.
        \smallskip
        \item Conversely, if $B$ is quasitriangular as a coideal subalgebra with invertible element $\widehat{K}$, then $B$ is quasitriangular as $H$-comodule algebra with $K$-matrix $K=R_{21}\widehat{K}_1R_{12}$. \qed
    \end{enumerate}
\end{lemma}

\smallskip

One quasitriangular left $H$-comodule algebra is the {\it reflective algebra} $R_H(A)$ of a finite-dimensional \linebreak quasitriangular Hopf algebra~$H$ and a left $H$-comodule algebra $A$.
Here, $R_H(A) = A \otimes H^*$ as a vector space, and $\xi a = a_{[0]}(\xi \leftharpoonup a_{[-1]})$ for $a \in A$ and $\xi \in H^*$, where $\langle \xi \leftharpoonup \ell, h\rangle := \langle \xi, \ell_{(2)} h S^{-1}(\ell_{(1)}) \rangle$ for $h,\ell \in H$. Here, we also employ the Sweedler notation:
\[
\delta(a):= a_{[-1]} \otimes a_{[0]} \in H \otimes A.\] 
Also, $R_H(A)$ has a $K$-matrix $\sum_d  h_d \otimes 1_A \otimes \xi_d$, for $\{h_d, \xi_d\}$ a dual basis of $H$. In fact, $\lmod{R_H(A)}$ is equivalent to the {\it reflective center} $\mathcal{E}_{\lmod{H}}(\lmod{A})$ as braided left $(\lmod{H})$-module categories.
 
 \smallskip
 
Given a quasitriangular Hopf algebra $(H, R)$ and a left $H$-comodule algebra $B$, there is a bijection between braidings of the left $(\lmod{H})$-module category of left $B$-modules and $K$-matrices of $B$. 
More precisely, for all $(X, \cdot) \in \lmod{H}$ and $(M, \ast) \in \lmod{B}$,  
\begin{align}\label{eq:formula-e}
    e_{X, M}: X \act M \to X \act M, \quad x \otimes m \mapsto \textstyle \sum_{i} (K_i\cdot x) \otimes (K^i \ast m)
\end{align}
 is a braiding of $\lmod{B}$ if and only if $K=\sum_i K_i\otimes K^i \in H\otimes B$ is a $K$-matrix of $B$.  This braiding of $\lmod{B}$ is symmetric if and only if  $(B,K)$ is triangular.
 
 \smallskip
 
 We say that quasitriangular left $H$-comodule algebras  $(B,K)$ and $(B',K')$ are {\it braided Morita equivalent} if $\lmod{B}$ and $\lmod{B'}$ are equivalent as braided left $(\lmod{H})$-module categories.


\subsection{Properties of coideal subalgebras} 
\label{sec:Hsimplicity}
Fix a Hopf algebra $(H, \Delta, \eps, S)$ and a left $H$-comodule algebra $(A, \delta)$. 
We say that $A$ is {\it augmented} if there exists an algebra map $\eps_A: A \to \Bbbk$ ({\it augmentation map}). For example, $H$ is augmented with the augmentation map being its counit map $\varepsilon$.
On the other hand, an {\it $H$-costable ideal} of $A$ is an ideal $I$ of $A$ such that $\delta(I)\subseteq H\otimes I$. The comodule algebra $A$ is called {\it $H$-simple} if has no non-trivial $H$-costable ideal. 
In fact, we have the result below.

\begin{lemma} \label{lem:HsimpleCSA}
Take a finite-dimensional Hopf algebra $H$. Then, a left $H$-comodule algebra $(A,\delta)$ is $H$-simple and augmented if and only if it is isomorphic to a coideal subalgebra of $H$ (here, $\delta = \Delta|_A$).
\end{lemma}

\begin{proof}
Every left coideal subalgebra $A$ of $H$ is $H$-simple by \cite[Theorem~6.1]{Skryabin}, and is augmented by using the counit map of $H$ restricted to $A$. On the other hand, every $H$-simple and augmented left $H$-comodule algebra $A$ is isomorphic to a coideal subalgebra due to \cite[Lemma~4.5]{NSS2025pp}.
\end{proof}


\section{Main results}\label{sec:results}
Here, we present the main results of this work on using braid groups of type BC  to produce invariants of (braided) Morita equivalence classes of (braided) module categories [Section~\ref{sec:typeBC}]. This is followed by similar results using braid groups of type D [Section~\ref{sec:typeD}], and 
preceded with a discussion of prior results in the type A case pertaining to braided monoidal categories [Section~\ref{sec:typeA}].

\smallskip

In the results below, let the subscript ``\textnormal{reg}" attached to an algebra denote its regular module.

\subsection{Prior results in Coxeter type A}  \label{sec:typeA}
Let us recall a well-known result; see, e.g., \cite[$\S$X.6.2]{Kassel}.

\begin{proposition} \label{prop:typeA}
Take a braided monoidal category $(\mathcal{C}, \otimes, \one, c)$ with an object $X \in \mathcal{C}$. For each $n \geq 2$, the following map is a representation of a braid group of type \textnormal{A}:
\[
\begin{array}{rl}
\rho_n^X: \textnormal{Br}_{n-1}^{\textnormal{A}} &\longrightarrow \Aut_{\mathcal{C}}(X^{\otimes n})\\[.2pc]
\sigma_i &\mapsto \ide_{X^{\otimes i-1}} \otimes c_{X,X} \otimes \ide_{X^{\otimes n-i-1}}.
\end{array}
\]

\vspace{-.3in}

\qed
\end{proposition}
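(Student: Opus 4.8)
The plan is to verify that the assignment $\sigma_i \mapsto \ide_{X^{\otimes i-1}} \otimes c_{X,X} \otimes \ide_{X^{\otimes n-i-1}}$ respects the two families of defining relations of $\textnormal{Br}_n^{\textnormal{A}}$, namely the braid relation $\sigma_i \sigma_{i+1} \sigma_i = \sigma_{i+1}\sigma_i\sigma_{i+1}$ for $1 \le i \le n-1$ and the commutation relation $\sigma_i\sigma_j = \sigma_j\sigma_i$ for $|i-j| \ge 2$. Since each $c_{X,X}$ is an isomorphism and tensoring with identities preserves isomorphisms, the images automatically land in $\Aut_{\mathcal{C}}(X^{\otimes n})$, so once the relations are checked, the universal property of a group presentation gives the group homomorphism $\rho_n^X$, and hence the representation, for each $n \ge 2$.

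First I would handle the commutation relation, which is essentially formal: for $|i-j|\ge 2$ the two morphisms $\ide^{\otimes i-1}\otimes c_{X,X}\otimes \ide^{\otimes n-i-1}$ and $\ide^{\otimes j-1}\otimes c_{X,X}\otimes \ide^{\otimes n-j-1}$ act on disjoint tensor-factor slots, so their composite in either order equals $\ide^{\otimes\cdots}\otimes c_{X,X}\otimes\ide^{\otimes\cdots}\otimes c_{X,X}\otimes\ide^{\otimes\cdots}$ by the interchange law (functoriality of $\otimes$, i.e. $(f\otimes g)(f'\otimes g') = (ff')\otimes(gg')$ when the composites are defined). Then I would reduce the braid relation to the case $n=3$, $i=1$: tensoring on the left by $\ide_{X^{\otimes i-1}}$ and on the right by $\ide_{X^{\otimes n-i-1}}$ is a functor preserving composition, so it suffices to prove $(c_{X,X}\otimes\ide_X)(\ide_X\otimes c_{X,X})(c_{X,X}\otimes\ide_X) = (\ide_X\otimes c_{X,X})(c_{X,X}\otimes\ide_X)(\ide_X\otimes c_{X,X})$ in $\Aut_{\mathcal{C}}(X^{\otimes 3})$.

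The core of the argument is this last identity, which is the classical Yang--Baxter equation for a braiding. I would derive it from the hexagon axioms \eqref{eq:braid1} and \eqref{eq:braid2} together with naturality of $c$. Applying \eqref{eq:braid1} with all three objects equal to $X$ gives $c_{X, X\otimes X} = (\ide_X\otimes c_{X,X})(c_{X,X}\otimes\ide_X)$, and naturality of $c$ in its second variable applied to the morphism $c_{X,X}\colon X\otimes X\to X\otimes X$ gives $(c_{X,X}\otimes\ide_X)\,c_{X,X\otimes X} = c_{X,X\otimes X}\,(\ide_X\otimes c_{X,X})$ after rewriting; combining these two and simplifying yields exactly the hexagon-free Yang--Baxter identity. (Alternatively one can run the symmetric computation using \eqref{eq:braid2} and naturality in the first variable; I would present whichever bookkeeping is cleanest, possibly both hexagons.) The main obstacle is purely organizational: keeping the index bookkeeping straight when passing from the general $\sigma_i$ to the $n=3$ case, and tracking which hexagon and which naturality square to invoke so that the morphisms compose in the right order — there is no conceptual difficulty, only the standard care required in manipulating strings of tensor--identity morphisms in a strict monoidal category.
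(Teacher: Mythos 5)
Your proposal is correct and follows essentially the same route the paper takes: the paper leaves Proposition~\ref{prop:typeA} to the literature, but its own verification of the analogous type~BC statement (Proposition~\ref{prop:typeBC}) rests on exactly your two ingredients, namely the Yang--Baxter identity derived from a hexagon axiom plus naturality of $c$ (Lemma~\ref{lem:cece}(a), which uses \eqref{eq:braid2} and naturality in the first variable, the mirror of your computation with \eqref{eq:braid1}) and the interchange law (``level exchange'') for the commutation relations. No gaps; your reduction to the $n=3$ case and the naturality square you invoke are both sound.
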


Shimizu \cite{Shimizu2010} shows that such representations serve as invariants of (braided) Morita equivalence classes of finite-dimensional (quasitriangular) Hopf algebras.

\begin{theorem} \cite[Lemma~2.11 and Theorem~3.1]{Shimizu2010} \label{thm:typeA}
Retain the notation of Proposition~\ref{prop:typeA}.
\begin{enumerate}[\upshape (a)]
\item Take finite-dimensional quasitriangular Hopf algebras $H$ and $H'$. If $\lmod{H} \simeq \lmod{H'}$ as braided monoidal categories, then $\rho_n^{H_\textnormal{reg}} \simeq \rho_n^{H'_\textnormal{reg}}$ as representations of  $\textnormal{Br}_{n-1}^{\textnormal{A}}$, for each $n \geq 2$.

\smallskip

\item Take finite-dimensional Hopf algebras $L$ and $L'$. If $\lmod{L} \simeq \lmod{L'}$ as monoidal categories, then $\rho_n^{D(L)_\textnormal{reg}} \simeq \rho_n^{D(L')_\textnormal{reg}}$ as representations of  $\textnormal{Br}_{n-1}^{\textnormal{A}}$, for each $n \geq 2$. \qed
\end{enumerate}
\end{theorem}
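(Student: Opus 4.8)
The plan is to reduce both parts to the statement that a braided monoidal equivalence carries the braid-group representation on $X^{\otimes n}$ to the one on $F(X)^{\otimes n}$, and then identify the right object $X$ in each case. For part (a), suppose $\Phi\colon \lmod{H}\to\lmod{H'}$ is a linear braided monoidal equivalence, with monoidal structure $\Phi^{(2)}$ and $\Phi^{(0)}$. The key computation is that for any object $X$, the isomorphism $\Phi^{(n)}_X\colon \Phi(X)^{\otimes n}\equivto\Phi(X^{\otimes n})$ built by iterating $\Phi^{(2)}$ intertwines $\rho_n^X$ with $\rho_n^{\Phi(X)}$: this follows from the compatibility of $\Phi^{(2)}$ with the braidings, namely $\Phi^{(2)}_{Y,X}\,c'_{\Phi(X),\Phi(Y)}=\Phi(c_{X,Y})\,\Phi^{(2)}_{X,Y}$, applied with $X=Y$ and conjugated by identities on the outer tensor factors; naturality of $\Phi^{(2)}$ is what lets the local braiding $\ide\otimes c_{X,X}\otimes\ide$ pass through. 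So $\rho_n^X\simeq\rho_n^{\Phi(X)}$ as representations of $\textnormal{Br}_n^{\textnormal A}$. Then one takes $X=H_\textnormal{reg}$, and the remaining point is that $\Phi(H_\textnormal{reg})\cong H'_\textnormal{reg}$ in $\lmod{H'}$; since $H_\textnormal{reg}$ is a progenerator and monoidal equivalences are in particular $\Bbbk$-linear equivalences, $\Phi$ sends a progenerator to a progenerator, and one pins down $H'_\textnormal{reg}$ among these by a dimension/length count (or by \cite[Lemma~2.11]{Shimizu2010}, which is exactly the statement that the regular module is detected categorically). Combining, $\rho_n^{H_\textnormal{reg}}\simeq\rho_n^{\Phi(H_\textnormal{reg})}\simeq\rho_n^{H'_\textnormal{reg}}$.

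For part (b), the hypothesis is only that $\lmod{L}\simeq\lmod{L'}$ as monoidal categories (not braided). I would invoke the fact recalled in Section~\ref{sec:quasitrian} that $\lmod{D(L)}\simeq\mathcal{Z}(\lmod{L})$ as braided monoidal categories, and that the Drinfeld center is a monoidal invariant: a monoidal equivalence $\lmod{L}\simeq\lmod{L'}$ induces a braided monoidal equivalence $\mathcal{Z}(\lmod{L})\simeq\mathcal{Z}(\lmod{L'})$, hence $\lmod{D(L)}\simeq\lmod{D(L')}$ as braided monoidal categories. Now part (a) applies verbatim with $H=D(L)$, $H'=D(L')$, giving $\rho_n^{D(L)_\textnormal{reg}}\simeq\rho_n^{D(L')_\textnormal{reg}}$.

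The main obstacle is the second step of part (a): showing that the regular module is preserved, i.e.\ that $\Phi(H_\textnormal{reg})\cong H'_\textnormal{reg}$. A monoidal equivalence need not send the distinguished object $H_\textnormal{reg}$ to $H'_\textnormal{reg}$ on the nose — it sends it to \emph{some} progenerator of $\lmod{H'}$ — so one must argue that, up to isomorphism, $H_\textnormal{reg}$ is characterized inside $\lmod{H}$ by categorical data that the equivalence preserves (for instance, being a projective generator of minimal length, together with $\dim_\Bbbk H=\dim_\Bbbk H'$, which itself follows since $\FPdim$ or the length of the unit's projective cover is categorical). This is precisely the content of \cite[Lemma~2.11]{Shimizu2010}, so in the write-up I would simply cite it; the braid-group intertwining computation above is then the only new ingredient, and it is routine. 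Everything else — that iterating a natural isomorphism gives a natural isomorphism, that conjugating by identities preserves the hexagon-type compatibility — is bookkeeping with the definitions in Section~\ref{sec:braidmm}.
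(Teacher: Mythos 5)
The paper does not actually prove Theorem~\ref{thm:typeA} --- it is quoted from Shimizu with a citation in place of a proof --- so the only in-paper object of comparison is the proof of its type BC analogue, Theorem~\ref{thm:typeBC}. Your sketch is correct and follows the same three-step template as that proof: your iterated monoidal constraint $\Phi^{(n)}_X$ plays the role of the map $T_2$ there (the compatibility of $\Phi^{(2)}$ with the braidings replacing the pentagon axiom together with condition \eqref{eq:brmodfun}), and your identification $\Phi(H_{\textnormal{reg}})\cong H'_{\textnormal{reg}}$ plays the role of $T_3$. Part (b) via $\lmod{D(L)}\simeq\mathcal{Z}(\lmod{L})$ and the monoidal invariance of the Drinfeld center is also the standard (and surely intended) reduction. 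The one step you pass over too quickly is the analogue of $T_1$: the assertion that $\rho_n^{X}\simeq\rho_n^{\Phi(X)}$ already requires a linear isomorphism between the underlying vector space of $X^{\otimes n}$ and that of $\Phi(X^{\otimes n})$ intertwining $\rho_n^{X}(\theta)$ with $\Phi(\rho_n^{X}(\theta))$, i.e.\ a natural isomorphism $U\Rightarrow U'\Phi$ of forgetful functors. This is not automatic for a $\Bbbk$-linear equivalence (Morita equivalences change underlying dimensions); it holds here because $\Phi\cong P\otimes_H -$ for an invertible bimodule $P$ that is free of rank one on each side, exactly as in Proposition~\ref{prop:AM-S}(b). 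That freeness statement is the same input that gives $\Phi(H_{\textnormal{reg}})\cong H'_{\textnormal{reg}}$, so your appeal to \cite[Lemma~2.11]{Shimizu2010} does cover it; but in a self-contained write-up you should make explicit that it is being used twice --- once to compare underlying vector spaces naturally in the argument $X^{\otimes n}$, and once to identify the image of the regular module.
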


We will expand part (a) of this result to the (braided) module-theoretic setting next.


\subsection{Main results in Coxeter type BC} \label{sec:typeBC}  We begin with constructing  representations of braid groups of Type BC using objects of braided module categories. This result is somewhat well-known, but we provide a proof for the reader's convenience. First, let us recall some useful identities.

\begin{lemma} \label{lem:cece}
Take a braided monoidal category $(\mathcal{C}, \otimes, \one, c)$ and a braided left $\mathcal{C}$-module category $(\mathcal{M}, \act, e)$. Then, for each $X,Y, Z \in \mathcal{C}$ and $M \in \mathcal{M}$:
\smallskip
\begin{enumerate}[\upshape (a)]
\item $(c_{Y,Z} \otimes \ide_X)(\ide_Y \otimes c_{X,Z}) (c_{X,Y} \otimes \ide_Z) = (\ide_Z \otimes c_{X,Y})(c_{X,Z} \otimes \ide_Y)(\ide_X \otimes c_{Y,Z})$; 

\medskip

\item $(c_{Y,X} \act \ide_M)(\ide_Y \otimes e_{X,M})(c_{X,Y} \act \ide_M)(\ide_X \otimes e_{Y,M})\\[.2pc]
 = (\ide_X \otimes e_{Y,M})(c_{Y,X} \act \ide_M)(\ide_Y \otimes e_{X,M})(c_{X,Y} \act \ide_M)$.
\end{enumerate}
\end{lemma}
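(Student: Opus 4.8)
The plan is to prove both identities by the same strategy: interpret each side as a braid-group-type relation and reduce it to the hexagon/braided-module axioms already recorded in the excerpt. For part (a), I would recognize that the claimed identity is precisely the Yang--Baxter (braid) relation for the braiding $c$ among the three objects $X,Y,Z$. The standard way to see this is to expand $c_{X \otimes Y, Z}$ in two ways using the hexagon axioms: applying \eqref{eq:braid2} gives $c_{X \otimes Y, Z} = (c_{X,Z} \otimes \ide_Y)(\ide_X \otimes c_{Y,Z})$, while first rewriting using naturality of $c$ in the first slot and \eqref{eq:braid1} produces the mirror expression. More cleanly: I would start from $c_{X, Y \otimes Z}$ and $c_{X \otimes Y, Z}$-type manipulations, or simply invoke the naturality square of $c_{-, Z}$ applied to the morphism $c_{X,Y}: X \otimes Y \to Y \otimes X$, which gives $(c_{X,Y} \otimes \ide_Z) \circ c_{X \otimes Y, Z}^{\text{-appropriate}} = \dots$; chaining this with \eqref{eq:braid2} on each side yields the equality. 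This is the classical derivation of the braid relation from the hexagons, so I expect it to be a short diagram chase.

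For part (b), the plan is analogous but uses the braided-module axioms \eqref{eq:brmod1} and \eqref{eq:brmod2} in place of the hexagons. I would consider $e_{X \otimes Y, M}$ and expand it in two different ways. One expansion comes directly from \eqref{eq:brmod1}: $e_{X \otimes Y, M} = (\ide_X \otimes e_{Y,M})(c_{Y,X} \act \ide_M)(\ide_Y \otimes e_{X,M})(c^{-1}_{Y,X} \act \ide_M)$. A second expansion comes from swapping the roles of $X$ and $Y$ in a variant of \eqref{eq:brmod1}, together with the naturality of $e$ and of $c$, and the relation \eqref{eq:brmod2}. Equating the two expansions of $e_{X \otimes Y, M}$, and then composing on the right by $(c_{Y,X} \act \ide_M)$ to clear the trailing $(c^{-1}_{Y,X} \act \ide_M)$, should produce exactly the stated identity (b). Alternatively, and perhaps more directly, I would show both sides of (b) equal $e_{X \otimes Y, M} \circ (c_{Y,X} \act \ide_M)$ or a similar normalized form: the left-hand side is visibly $(c_{Y,X} \act \ide_M)^{-1} \cdot \big[(c_{Y,X}\act\ide_M)(\ide_Y \otimes e_{X,M})(c_{X,Y}\act\ide_M)(\ide_X \otimes e_{Y,M})\big]$, and I would massage the bracketed term using \eqref{eq:brmod1} and \eqref{eq:brmod2}.

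The main obstacle I anticipate is bookkeeping in part (b): the braided-module axioms involve both $c_{Y,X}$ and $c^{-1}_{Y,X}$ acting on $M$, and there are several ways to insert identities $(c_{Y,X}\act\ide_M)(c^{-1}_{Y,X}\act\ide_M) = \ide$, so it is easy to go in circles. The key is to commit to expanding one object — namely to compute $e_{X \otimes Y, M}$ via \eqref{eq:brmod1} applied to the pair $(X,Y)$ versus via \eqref{eq:brmod1} applied to $(Y,X)$ after using functoriality of $\otimes$ and the symmetry $X \otimes Y \cong Y \otimes X$ is \emph{not} assumed, so instead I would combine \eqref{eq:brmod1} with \eqref{eq:brmod2} to re-express the $(c_{Y,X}\act\ide_M)(\ide_Y \otimes e_{X,M})(c_{X,Y}\act\ide_M)$ block appearing inside as $e_{X, Y \act M}$-type data. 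Once that substitution is made, both sides collapse to the same composite. I would also double-check all naturality squares for $e$ (naturality in the $\mathcal{C}$-variable is implicitly used when sliding $c$'s past $e$'s), since those are the steps most prone to sign/order errors. Apart from this, the argument is a routine consequence of the axioms stated in Section~\ref{sec:braidmm}.
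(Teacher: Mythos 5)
Your proposal is correct and, in its ``more direct'' form for (b), coincides with the paper's own proof: collapse the first two factors of (a) to $c_{Y\otimes X,Z}$ via \eqref{eq:braid2} (resp.\ the first three factors of (b) to $e_{X,\,Y\act M}$ via \eqref{eq:brmod2}), commute past the remaining factor by naturality, and re-expand. The one point to tighten is that the naturality needed to finish (b) is naturality of $e_{X,-}$ in the $\mathcal{M}$-variable, applied to the endomorphism $e_{Y,M}$ of $Y\act M$, rather than naturality in the $\mathcal{C}$-variable as you suggest.
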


\begin{proof}
(a)  The left-hand side is equal to $c_{Y \otimes X, Z} \, (c_{X,Y} \otimes \ide_Z)$ by \eqref{eq:braid2}, which in turn is equal to  $(\ide_Z \otimes c_{X,Y})\, c_{X \otimes Y, Z}$ by the naturality of $c$. This output is the right-hand side again by \eqref{eq:braid2}.

\smallskip
(b) The left-hand side is equal to $e_{X,Y \act M} \, (\ide_X \otimes e_{Y,M})$  by \eqref{eq:brmod2}, which in turn is equal to  $(\ide_X \otimes e_{Y,M})\, e_{X,Y \act M}$  by the naturality of $e$. This output is the right-hand side again by \eqref{eq:brmod2}.
\end{proof}

\begin{proposition} \label{prop:typeBC}
Take a braided monoidal category $(\mathcal{C}, \otimes, \one, c)$ with an object $X \in \mathcal{C}$, and a braided left $\mathcal{C}$-module category $(\mathcal{M}, \act, e)$ with an object $M \in \mathcal{M}$. For each $n \geq 2$, the following map is a representation of a braid group of type \textnormal{BC}:
\[
\begin{array}{rl}
\rho_n^{X,M}: \textnormal{Br}_n^{\textnormal{BC}} &\longrightarrow \Aut_{\mathcal{C}}(X^{\otimes n} \act M)\\[.2pc]
\sigma_i &\mapsto \ide_{X^{\otimes i-1}} \otimes c_{X,X} \otimes \ide_{X^{\otimes n-i-1}\act M}\\[.2pc]
t &\mapsto \ide_{X^{\otimes n-1}} \otimes e_{X,M}.
\end{array}
\]
\end{proposition}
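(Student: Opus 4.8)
The plan is to verify that the assignment $\rho_n^{X,M}$ respects each of the four families of defining relations of $\textnormal{Br}_n^{\textnormal{BC}}$ listed in Section~\ref{sec:braidgrp}. Since $\rho_n^{X,M}$ sends the $\sigma_i$ to the same endomorphisms that appear in Proposition~\ref{prop:typeA} (now tensored with $\ide_M$ on the right via $\act$), the braid relation $\sigma_i\sigma_{i+1}\sigma_i = \sigma_{i+1}\sigma_i\sigma_{i+1}$ among the $\sigma_i$ is exactly the content of Lemma~\ref{lem:cece}(a) applied with $X=Y=Z$ (equivalently, it is already covered by Proposition~\ref{prop:typeA}), and the far-commutation relation $\sigma_i\sigma_j = \sigma_j\sigma_i$ for $|i-j|\ge 2$ holds because the two braidings $c_{X,X}$ act on disjoint tensor factors of $X^{\otimes n}$, so the morphisms literally commute as endomorphisms of $X^{\otimes n}\act M$. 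I would dispatch these two families first, citing Proposition~\ref{prop:typeA}.

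Next I would handle the relation $\sigma_i\, t = t\, \sigma_i$ for $i = 1,\dots,n-2$. Here $\sigma_i$ acts as $c_{X,X}$ in tensor slots $i,i+1$ of $X^{\otimes n}$ while $t$ acts as $e_{X,M}$ on the last copy of $X$ together with $M$; since $i \le n-2$, these again involve disjoint parts of $X^{\otimes n}\act M$, so the two endomorphisms commute. To make this precise I would note that $\ide_{X^{\otimes n-1}}\otimes e_{X,M}$ is natural in the $X^{\otimes n-1}$ factor, so it commutes with any endomorphism of $X^{\otimes n-1}$ tensored with $\ide_{X\act M}$, and $\sigma_i$ is such an endomorphism.

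The crux is the "type-$4$ edge'' relation $\sigma_{n-1}\, t\, \sigma_{n-1}\, t = t\, \sigma_{n-1}\, t\, \sigma_{n-1}$, which is where the module-category axioms genuinely enter. Under $\rho_n^{X,M}$ this becomes an identity of endomorphisms of $X^{\otimes n}\act M$, all four operators acting trivially on the first $n-2$ copies of $X$; so it reduces to showing, in $\cM$ with two copies of $X$ acting on $M$, that
\[
(c_{X,X}\act\ide_M)(\ide_X\otimes e_{X,M})(c_{X,X}\act\ide_M)(\ide_X\otimes e_{X,M}) = (\ide_X\otimes e_{X,M})(c_{X,X}\act\ide_M)(\ide_X\otimes e_{X,M})(c_{X,X}\act\ide_M).
\]
This is exactly Lemma~\ref{lem:cece}(b) specialized to $Y = X$. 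So the proof assembles as: reduce the braid-group relation to this local identity on $X^{\otimes(n-2)}\otimes(X\otimes X\act M)$ via disjoint-support/naturality arguments, then invoke Lemma~\ref{lem:cece}(b). The main obstacle is purely bookkeeping — carefully tracking which tensor slots each generator acts on and justifying the "disjoint factors commute'' steps rigorously using naturality of $c$ and $e$ rather than hand-waving — but there is no conceptual difficulty beyond Lemma~\ref{lem:cece}. Finally I would remark that $\rho_n^{X,M}$ lands in $\Aut_{\mathcal C}(X^{\otimes n}\act M)$ because $c$ and $e$ are natural \emph{isomorphisms}, so every generator maps to an invertible morphism, and since the assignment respects all defining relations it extends (uniquely) to a group homomorphism on $\textnormal{Br}_n^{\textnormal{BC}}$.
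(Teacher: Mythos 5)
Your proposal is correct and follows essentially the same route as the paper's proof: the braid relation among the $\sigma_i$ comes from Lemma~\ref{lem:cece}(a), the far-commutation and $\sigma_i t = t\sigma_i$ relations from disjoint-support/naturality (what the paper calls ``level exchange''), and the type-BC edge relation from Lemma~\ref{lem:cece}(b) with $Y=X$. Your reduction of the edge relation to the displayed local identity on $X\otimes X\act M$ matches the paper exactly.
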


\begin{proof}
Let $\rho:=\rho_n^{X,M}$. By Lemma~\ref{lem:cece}(a) and by level exchange, respectively, we have that 
\[
\begin{array}{ll}
\rho(\sigma_i)\rho(\sigma_{i+1})\rho( \sigma_i) = \rho(\sigma_{i+1})\rho(\sigma_i )\rho(\sigma_{i+1}) \, &\text{ for $i=1, \dots, n-2$},\\[.2pc]
\rho(\sigma_i)\rho(\sigma_j) =  \rho(\sigma_j)\rho(\sigma_i) \, &\text{ for $|i-j| \geq 2$}.
\end{array}
\] 
Next, by level exchange and by Lemma~\ref{lem:cece}(b), respectively, we obtain that
\[
\rho(\sigma_i)\rho(t) =  \rho(t)\rho(\sigma_i) \, \text{ for $i = 1, \dots, n-2$}, \; \qquad \rho(\sigma_{n-1})\rho(t) \rho(\sigma_{n-1})\rho(t) = \rho(t)\rho(\sigma_{n-1})\rho(t)\rho(\sigma_{n-1}).
\]
Thus, $\rho$ is a group homomorphism.
\end{proof}

Next, we present a useful result due to Andruskiewitsch--Mombelli \cite{AndruskiewitschMombelli} and  Skryabin \cite{Skryabin}.

\begin{proposition} \label{prop:AM-S}
Take a finite-dimensional Hopf algebra $H$ with left $H$-comodule algebras  $A$~and~$A'$.
\begin{enumerate}[\upshape (a)]
\item If $F: \lmod{A} \to \lmod{A'}$ is an equivalence of left $(\lmod{H})$-module categories, then 
\[
F \; \cong \; P \otimes_A -,
\] 
for a $(A',A)$-bimodule $P$ that is equipped with an $(A',A)$-bimodule map $P \to H \otimes P$ serving as a left $H$-coaction map.

\smallskip

\item  
If, further, $A$ and $A'$ are $H$-simple and augmented, then $P \cong A$ as right $A$-modules and $P \cong A'$ as left $A'$-modules.

 \end{enumerate}
\end{proposition}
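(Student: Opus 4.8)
The plan is to build on the general categorical characterization of Morita equivalence of comodule algebras due to Andruskiewitsch--Mombelli, and then apply Skryabin's freeness theorem to pin down the bimodule $P$. For part (a): an equivalence $F\colon \lmod{A}\to\lmod{A'}$ of abelian categories is, by the Eilenberg--Watts theorem, of the form $P\otimes_A-$ for an $(A',A)$-bimodule $P$ that is projective and finitely generated as a right $A$-module (taking $P:=F(A)$ with its residual right $A$-action). The point is to upgrade this to the $(\lmod{H})$-module setting: the left $(\lmod{H})$-module structure on $\lmod{A}$ and $\lmod{A'}$ comes from the comodule structures $\delta_A$, $\delta_{A'}$, and the module-functor constraint $s_{V,N}\colon F(V\act N)\equivto V\act' F(N)$ for $V\in\lmod{H}$, $N\in\lmod{A}$, evaluated at $V=H$ and $N=A$, supplies a comparison map that one transports to an $(A',A)$-bimodule map $\lambda\colon P\to H\otimes P$. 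Checking that $\lambda$ is coassociative and counital — i.e. that it is genuinely a left $H$-coaction making $P$ an $H$-equivariant $(A',A)$-bimodule in the sense of \cite{AndruskiewitschMombelli} — is a diagram chase using the coherence (pentagon/triangle) axioms for the module functor $(F,s)$ together with the definition of the $(\lmod{H})$-action on each side; this is the content of \cite[\S\S 1--2]{AndruskiewitschMombelli} and should just be cited rather than reproved.

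For part (b): once $P$ is an $H$-equivariant $(A',A)$-bimodule that is finitely generated projective on the right, restrict attention to the right $A$-module structure together with the left $H$-coaction $\lambda\colon P\to H\otimes P$; this makes $P$ a right $A$-module object in $\lcomod{H}$, equivalently a relative Hopf module of the appropriate type over the pair $(H,A)$. Skryabin's structure theorem \cite[Theorem~...]{Skryabin} asserts that when $A$ is $H$-simple and augmented, every such $(H,A)$-relative Hopf module which is finitely generated (or more generally faithfully flat/projective) over $A$ is \emph{free} as a right $A$-module; more precisely it is induced, $P\cong W\otimes A$ for $W=\{p\in P : \lambda(p)=1\otimes p\}$ the space of coinvariants, and the augmentation forces $W$ to be one-dimensional because $P$ is a \emph{progenerator} (it implements an equivalence, hence has rank one). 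Hence $P\cong A$ as right $A$-modules. The symmetric statement $P\cong A'$ as left $A'$-modules follows by the same argument applied to the quasi-inverse equivalence $F^{-1}\cong Q\otimes_{A'}-$, or equivalently by viewing $P$ as a left $A'$-module object in $\lcomod{H}$ and invoking Skryabin's theorem for $A'$ (which is also $H$-simple and augmented by hypothesis); note $P\cong A$ on the right already shows $P$ is an $(A',A)$-progenerator, so the rank-one conclusion on the left is automatic.

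I expect the main obstacle to be purely bookkeeping: matching the precise notion of ``$H$-equivariant bimodule'' used by Andruskiewitsch--Mombelli with the hypotheses under which Skryabin proves freeness (Skryabin works with comodule algebras and one-sided relative Hopf modules, so one must carefully forget the extra bimodule structure, apply his theorem, and then check nothing was lost), and verifying that the bimodule $P=F(A)$ is finitely generated projective on each side so that Skryabin's freeness result genuinely applies and the coinvariant space $W$ is finite-dimensional. The augmentation hypothesis enters exactly to force $\dim_\Bbbk W=1$: tensoring the isomorphism $P\cong W\otimes A$ down along $\eps_A\colon A\to\Bbbk$ gives $P\otimes_A\Bbbk\cong W$, while $P\otimes_A-$ being an equivalence sends the simple $\Bbbk$ (the augmentation module) to a simple $A'$-module, whose restriction to the ground field is one-dimensional when combined with $H$-simplicity of $A'$; so $W\cong\Bbbk$ and $P\cong A$. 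None of these steps require new ideas beyond a careful invocation of \cite{AndruskiewitschMombelli} and \cite{Skryabin}.
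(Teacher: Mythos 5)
Part (a) of your proposal matches the paper: both reduce to \cite[Proposition~1.23]{AndruskiewitschMombelli}, and your sketch of how the module-functor constraint at $V=H$, $N=A$ produces the equivariant structure on $P=F(A)$ is precisely the content of that citation. The genuine gap is in part (b), in the step where you conclude that the free rank of $P$ over $A$ is one. You offer two justifications and neither is valid. First, ``$P$ implements an equivalence, hence is a progenerator, hence has rank one'' is false: an invertible bimodule over noncommutative rings can be free of any rank on one side (the Morita bimodule between $A$ and $\operatorname{Mat}_n(A)$ is free of rank $n$ as a right $A$-module). Second, the claim that $W\cong P\otimes_A\Bbbk\cong F(\Bbbk_A)$ is one-dimensional because it is a simple $A'$-module ``combined with $H$-simplicity of $A'$'' does not follow: $H$-simplicity (absence of nontrivial $H$-costable ideals) says nothing about the dimensions of simple $A'$-modules --- for instance $\Bbbk S_3$ is an $H$-simple, augmented coideal subalgebra of itself with a two-dimensional simple module. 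It is true \emph{a posteriori} that $F(\Bbbk_A)$ is one-dimensional (since $P\cong A$), but that is the statement to be proved, so your argument is circular at exactly this point.

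The paper closes this gap by a different, elementary device: it applies Skryabin's freeness theorem \cite[Theorem~4.2]{Skryabin} not only to $P$ as a right $A$-module but also to the quasi-inverse bimodule $Q$ as a left $A$-module, writes $P\cong A^{\oplus i}$ and $Q\cong A^{\oplus j}$, and then feeds these into the invertibility isomorphisms $P\otimes_A Q\cong A'$ and $Q\otimes_{A'}P\cong A$ to obtain $A'\cong A^{\oplus ij}$ and $A\cong (A')^{\oplus k\ell}$ as vector spaces; hence $A\cong A^{\oplus ijk\ell}$, and finite-dimensionality forces $i=j=k=\ell=1$. Your outline already contains every ingredient needed for this count (freeness from Skryabin, the quasi-inverse $Q$, finite-dimensionality of the algebras); replace the two rank-one claims by this dimension comparison and the proof goes through.
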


\begin{proof}
(a) This follows from \cite[Proposition~1.23]{AndruskiewitschMombelli}.

\smallskip

(b) 
Note that the quasi-inverse of $F$ in part (a) is naturally isomorphic to a functor $Q \otimes_{A'} -$, for a $(A,A')$-bimodule $Q$ that is equipped with an $(A,A')$-bimodule map $Q \to H \otimes Q$ serving as a left $H$-coaction map. Moreover, $P \otimes_A Q \cong A'$ and $Q \otimes_{A'} P \cong A$; see \cite[Proposition~1.24]{AndruskiewitschMombelli}. Now by the hypotheses on $A$ and $A'$, the kernel of their respective augmentation maps can be used in the application of \cite[Theorem~4.2]{Skryabin} to obtain that $P$ is a free right $A$-module and that $Q$ is a free right $A'$-module. Say $P \cong A^{\oplus n}$ as right $A$-modules, and $Q \cong A'^{\, \oplus m}$ as right $A'$-modules, for some $n,m \in \mathbb{N}_{\geq 1}$. Now, as vector spaces:
\[ 
A' \; \cong \; P \otimes_A Q \; \cong \; Q^{\oplus n} \; \cong \; A'^{\,\oplus nm}.
\]
Thus, $n = m = 1$, and $P \cong A$ as right $A$-modules. The left statement follows likewise.
\end{proof}

Now we present the main result of the article.

\begin{theorem} \label{thm:typeBC}
Retain the notation of Proposition~\ref{prop:typeBC}, and take a finite-dimensional quasitriangular Hopf algebra $H$.  Take quasitriangular left coideal subalgebras  $B$ and $B'$. If $\lmod{B} \simeq \lmod{B'}$ as braided left $(\lmod{H})$-module categories, then 
\[
\rho_n^{H_\textnormal{reg}, B_\textnormal{reg}} \; \simeq \; \rho_n^{H_\textnormal{reg}, B'_\textnormal{reg}}
\] 
as representations of  $\textnormal{Br}_n^{\textnormal{BC}}$, for each $n \geq 2$.
\end{theorem}

\begin{proof}
We aim to produce a linear isomorphism  $T: H^{\otimes n} \otimes B \to H^{\otimes n} \otimes B'$ such that if there is equivalence of braided left $(\lmod{H})$-module categories, $F: \lmod{B} \to \lmod{B'}$, then the  diagram below commutes:
\medskip
{\small\[ 
\xymatrix@R=0.8pc{
H^{\otimes n} \otimes B 
\ar@/^1.3pc/[rrrrrr]_{T}
\ar[rr]_{T_1}
\ar[dd]_{\rho_n^{H_\textnormal{reg},B_\textnormal{reg}}(\theta)}
&& 
F(H^{\otimes n} \otimes B)
\ar[rr]_{T_2}
\ar[dd]_{F(\rho_n^{H_\textnormal{reg},B_\textnormal{reg}}(\theta))}
&&
H^{\otimes n} \otimes F(B)
\ar[rr]_{T_3}
\ar[dd]^{\rho_n^{H_\textnormal{reg},F(B_\textnormal{reg})}(\theta)}
&&
H^{\otimes n} \otimes B'
\ar[dd]^{\rho_n^{H_\textnormal{reg},B'_\textnormal{reg}}(\theta)}\\
&&&&&&\\
H^{\otimes n} \otimes B
\ar@/_1.3pc/[rrrrrr]^{T} 
\ar[rr]^{T_1}
&& 
F(H^{\otimes n} \otimes B)
\ar[rr]^{T_2}
&&
H^{\otimes n} \otimes F(B)
\ar[rr]^{T_3}
&&
H^{\otimes n} \otimes B'
}
\]}

\medskip

\noindent for all $\theta \in \textnormal{Br}_n^{\textnormal{BC}}$. Here, $H$ and $B$ are the underlying vector spaces of $H_{\textnormal{reg}}$ and $B_{\textnormal{reg}}$, respectively. This will be achieved by producing the intermediate linear maps $T_1$, $T_2$, $T_3$ pictured above.

\smallskip

To define $T_1$, consider the forgetful functors to the category of vector spaces, $U: \lmod{B} \to \mathsf{Vec}$ and $U': \lmod{B'} \to \mathsf{Vec}$. Then, there exists a natural isomorphism 
$\Psi: U \natisom  U'  F$.
Indeed, by applying Lemma~\ref{lem:HsimpleCSA} and Proposition~\ref{prop:AM-S}, we get that $F \cong P \otimes_B -$, for a $(B',B)$-bimodule $P$ such that there is an isomorphism $\phi: P \equivto B$ of right $B$-modules. Now for a  left $B$-module $(V, \cdot)$, we get that $U(V) = V$ and $U'F(V) \cong P \otimes_B V$, as vector spaces. It is straight-forward to check that the maps
\[
\Psi_V: V \equivto P \otimes_B V \;\; (\cong F(V)), \quad v \mapsto \phi^{-1}(1_B) \otimes_B v
\] 
form the components of the natural isomorphism $\Psi$ above. Here, the inverse of $\Psi_V$ is given by the map $P \otimes_B V \to V$ defined by $p \otimes_B v \mapsto \phi(p) \cdot v$. Now take $T_1:= \Psi_{H^{\otimes n} \otimes B}$. We then obtain that the left region of the diagram above commutes by the naturality of $\Psi$, for any choice of $\theta \in  \textnormal{Br}_n^{\textnormal{BC}}$.

\smallskip

Define $T_2:= s_{H^{\otimes n}, B}: F(H^{\otimes n} \otimes B) \equivto H^{\otimes n} \otimes F(B)$, the module functor constraint for $F$. Then, for the element $\theta = \sigma_i$ of $\textnormal{Br}_n^{\textnormal{BC}}$, we get: 
\begin{align*}
\rho_n^{H_\textnormal{reg},F(B_\textnormal{reg})}(\sigma_i)  \circ s_{H^{\otimes n}, B} 
& \; = \; \left(\ide_{H^{\otimes i-1}} \otimes c_{H,H} \otimes \ide_{H^{\otimes n-i-1}\act F(B)}\right) \circ s_{H^{\otimes n}, B}\\
& \overset{\textnormal{$s$\,nat'l}}{=} s_{H^{\otimes n}, B}  \circ F(\ide_{H^{\otimes i-1}} \otimes c_{H,H} \otimes \ide_{H^{\otimes n-i-1}\act B})  \\
& \; = \; s_{H^{\otimes n}, B} \circ  F \hspace{-.03in} \left(\rho_n^{H_\textnormal{reg},B_\textnormal{reg}}(\sigma_i)\right).
\end{align*}
Next, for the element $\theta = t$ of $\textnormal{Br}_n^{\textnormal{BC}}$, we get:
\begin{align*}
\rho_n^{H_\textnormal{reg},F(B_\textnormal{reg})}(t)  \circ s_{H^{\otimes n}, B} 
& \; = \; \left(\ide_{H^{\otimes n-1}} \otimes e_{H,F(B)}\right) \circ s_{H^{\otimes n}, B}\\
& \overset{\textnormal{pent.\,axiom for $s$}}{=} \left(\ide_{H^{\otimes n-1}} \otimes e_{H,F(B)}\right)\circ (\ide_{H^{\otimes n-1}} \otimes s_{H,B}) \circ s_{H^{\otimes n-1}, H \otimes B} \\
& \overset{\eqref{eq:brmodfun}}{=} (\ide_{H^{\otimes n-1}} \otimes s_{H,B}) \circ \left(\ide_{H^{\otimes n-1}} \otimes F(e_{H,B})\right)\circ s_{H^{\otimes n-1}, H \otimes B} \\
& \overset{\textnormal{$s$\,nat'l}}{=} (\ide_{H^{\otimes n-1}} \otimes s_{H,B})\circ s_{H^{\otimes n-1}, H \otimes B} \circ F \hspace{-.03in} \left(\ide_{H^{\otimes n-1}} \otimes e_{H,B}\right) \\
& \overset{\textnormal{pent.\,axiom for $s$}}{=}  s_{H^{\otimes n}, B} \circ  F \hspace{-.03in} \left(\rho_n^{H_\textnormal{reg},B_\textnormal{reg}}(t)\right).
\end{align*}
Thus, the middle region of the diagram above commutes with the choice of $T_2$ above.

\smallskip

Finally, we define $T_3$. By Lemma~\ref{lem:HsimpleCSA} and  Proposition~\ref{prop:AM-S},  $F \cong P \otimes_B -$, for a $(B',B)$-bimodule $P$ such that $P \cong B'$ as left $B'$-modules. So, we get isomorphisms $F(B) \cong P \otimes_B B \cong P \cong B'$ as left $B'$-modules, and in particular, as vector spaces. Label this linear isomorphism by $\phi'$. Now take $T_3:= \ide_{H^{\otimes n}} \otimes \phi'$. Then, for the element $\theta = \sigma_i$ (resp., for $\theta = t$) of $\textnormal{Br}_n^{\textnormal{BC}}$, the right region of the diagram above commutes by level exchange (resp., by the naturality of $e$).

\smallskip

Thus, we have defined a linear isomorphism $T := T_3\, T_2\, T_1: H^{\otimes n} \otimes B \equivto H^{\otimes n} \otimes B'$ that yields an equivalence $\rho_n^{H_\textnormal{reg}, B_\textnormal{reg}} \simeq \rho_n^{H_\textnormal{reg}, B'_\textnormal{reg}}$ 
of representations of  $\textnormal{Br}_n^{\textnormal{BC}}$, for each $n \geq 2$.
\end{proof}

\begin{corollary} \label{cor:csa-dim-same}
Take a finite-dimensional quasitriangular Hopf algebra $H$, with quasitriangular left coideal subalgebras $B$ and $B'$. If $\lmod{B} \simeq \lmod{B'}$ as braided left $(\lmod{H})$-module categories, then 
\[
\textnormal{dim}_\Bbbk B \; = \; \textnormal{dim}_\Bbbk B'.
\]
\end{corollary}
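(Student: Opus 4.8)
The plan is to deduce this corollary directly from Theorem~\ref{thm:typeBC} together with the $H$-simplicity and augmentation of left coideal subalgebras of a finite-dimensional Hopf algebra. First I would note that, by \cite[Theorem~6.1]{Skryabin} (recalled in Section~\ref{sec:Hsimplicity}), any left coideal subalgebra $B$ of a finite-dimensional Hopf algebra $H$ is $H$-simple, and moreover $B$ is augmented via the restriction $\eps_H|_B$ of the counit of $H$ (this restriction is an algebra map since $\eps_H$ is). Hence both $B$ and $B'$ satisfy the hypotheses of Theorem~\ref{thm:typeBC}.

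Next, I would invoke Theorem~\ref{thm:typeBC}: since $\lmod{B} \simeq \lmod{B'}$ as braided left $(\lmod{H})$-module categories, the representations $\rho_n^{H_\textnormal{reg}, B_\textnormal{reg}}$ and $\rho_n^{H_\textnormal{reg}, B'_\textnormal{reg}}$ of $\textnormal{Br}_n^{\textnormal{BC}}$ are equivalent for each $n \geq 2$. Equivalent representations have the same underlying vector space dimension, so $\dim_\Bbbk(H^{\otimes n} \otimes B) = \dim_\Bbbk(H^{\otimes n} \otimes B')$, i.e. $(\dim_\Bbbk H)^n \cdot \dim_\Bbbk B = (\dim_\Bbbk H)^n \cdot \dim_\Bbbk B'$. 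Since $H$ is finite-dimensional (and nonzero, so $\dim_\Bbbk H \geq 1$), we may cancel $(\dim_\Bbbk H)^n$ to conclude $\dim_\Bbbk B = \dim_\Bbbk B'$.

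Alternatively — and perhaps more directly — one can read the conclusion off the proof of Theorem~\ref{thm:typeBC} itself: there, a linear isomorphism $T: H^{\otimes n} \otimes B \equivto H^{\otimes n} \otimes B'$ is constructed (in fact $T$ is built from the isomorphisms $P \cong B$ and $P \cong B'$ supplied by Proposition~\ref{prop:AM-S}(b)), which already gives $\dim_\Bbbk B = \dim_\Bbbk B'$ after cancelling the factor $(\dim_\Bbbk H)^n$; one could even take $n = 0$ in Proposition~\ref{prop:AM-S} to see directly that $B \cong P \cong B'$ as vector spaces. There is essentially no obstacle here: the only point requiring a word of justification is that left coideal subalgebras of a finite-dimensional Hopf algebra are $H$-simple and augmented, and both facts are already noted in Section~\ref{sec:Hsimplicity}. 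The corollary is thus immediate once the hypotheses of Theorem~\ref{thm:typeBC} are verified.
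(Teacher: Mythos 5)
Your proposal is correct and follows essentially the same route as the paper: verify that left coideal subalgebras are $H$-simple and augmented, apply Theorem~\ref{thm:typeBC}, and compare dimensions of the equivalent representations (the paper phrases this as equality of traces of $\rho_n^{H_\textnormal{reg}, B_\textnormal{reg}}(1_{\textnormal{Br}_n^{\textnormal{BC}}})$ and $\rho_n^{H_\textnormal{reg}, B'_\textnormal{reg}}(1_{\textnormal{Br}_n^{\textnormal{BC}}})$, which is the same as comparing the dimensions of the underlying spaces and cancelling $(\dim_\Bbbk H)^n$). Your alternative observation that $B \cong P \cong B'$ as vector spaces already follows from Proposition~\ref{prop:AM-S}(b) is a valid shortcut but not how the paper argues.
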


\begin{proof}
 Apply Theorem~\ref{thm:typeBC} to obtain that 
\[ \textstyle
\text{dim}_\Bbbk B \; = \; \frac{1}{(\text{dim}_\Bbbk H)^{n}} \, \text{trace}\big(\rho_n^{H_{\text{reg}}, B_{\text{reg}}}(1_{\textnormal{Br}_n^{\textnormal{BC}}})\big)\; =\; \frac{1}{(\text{dim}_\Bbbk H)^{n}}\,  \text{trace}\big(\rho_n^{H_{\text{reg}}, B'_{\text{reg}}}(1_{\textnormal{Br}_n^{\textnormal{BC}}})\big)\; = \;\text{dim}_\Bbbk B'.
\]

\vspace{-.2in}

\end{proof}

\vspace{.1in}

As a direction for further research, we ask:

\begin{question} \label{ques:hyp}
Can one obtain a generalization of Theorem~\ref{thm:typeBC} for $H$-comodule algebras more general than coideal subalgebras of $H$? For instance, can one obtain an analogue of Theorem~\ref{thm:typeA}(b) using reflective algebras in place of Drinfeld doubles?
\end{question}

\begin{question} \label{ques:RHA}
For finite-dimensional Hopf algebras $L$ and $L'$, we have that $\lmod{L}$ and $\lmod{L'}$ are {\it categorically Morita equivalent}  if and only if the Drinfeld doubles $D(L)$ and $D(L')$ are braided Morita equivalent \cite[Definition~7.12.11, Theorem~8.12.3]{EGNO}. Likewise, is there a notion of categorical Morita equivalence for categories of left modules over comodule algebras that will be tied to the braided Morita equivalence of their reflective algebras?
\end{question}


\subsection{Results in Coxeter type D} \label{sec:typeD} We adapt the results above for type D braid groups. We first produce representations of such groups using objects of symmetric module categories.

\begin{proposition} \label{prop:typeD}
Take a braided monoidal category $(\mathcal{C}, \otimes, \one, c)$ with an object $X \in \mathcal{C}$, and a symmetric left $\mathcal{C}$-module category $(\mathcal{M}, \act, e)$ with an object $M \in \mathcal{M}$. For each $n \geq 2$, the following map is a representation of a braid group of type \textnormal{D}:
\[
\begin{array}{rl}
\rho_n^{X,M}: \textnormal{Br}_n^{\textnormal{D}} &\longrightarrow \Aut_{\mathcal{C}}(X^{\otimes n} \act M)\\[.2pc]
\sigma_i &\mapsto \ide_{X^{\otimes i-1}} \otimes c_{X,X} \otimes \ide_{X^{\otimes n-i-1}\act M}\\[.2pc]
t &\mapsto \ide_{X^{\otimes n-2}} \otimes (\ide_X \otimes e_{X,M})(c_{X,X} \act \ide_M)(\ide_X \otimes e_{X,M}).
\end{array}
\]
\end{proposition}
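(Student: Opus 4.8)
The goal is to check that the assignment $\rho := \rho_n^{X,M}$ respects each of the four families of defining relations of $\textnormal{Br}_n^{\textnormal{D}}$. The braid relations $\rho(\sigma_i)\rho(\sigma_{i+1})\rho(\sigma_i) = \rho(\sigma_{i+1})\rho(\sigma_i)\rho(\sigma_{i+1})$ for $i = 1, \dots, n-2$ and the commutation relations $\rho(\sigma_i)\rho(\sigma_j) = \rho(\sigma_j)\rho(\sigma_i)$ for $|i-j| \geq 2$ are exactly as in Proposition~\ref{prop:typeBC}: the first follows from Lemma~\ref{lem:cece}(a) applied with $Y = Z = X$ after identifying the relevant tensorands, and the second follows from level exchange since $\sigma_i$ and $\sigma_j$ act on disjoint tensor slots. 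The commutation $\rho(\sigma_i)\rho(t) = \rho(t)\rho(\sigma_i)$ for $i = 1, \dots, n-3$ likewise follows from level exchange, as $\sigma_i$ then acts on the slots $i, i+1 \leq n-2$ while $t$ acts on the slots $n-1, n$ together with $M$ (which are disjoint when $i \leq n-3$). The commutation $\rho(\sigma_{n-1})\rho(t) = \rho(t)\rho(\sigma_{n-1})$ is the delicate one among the ``easy'' relations: here $\sigma_{n-1}$ acts on slots $n-1, n$ and $t$ acts on slots $n-1, n$ and $M$, so they genuinely overlap, and one must use a braided-module-categorical identity rather than pure level exchange.

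\textbf{The main obstacle: the two genuinely overlapping relations.} The substantive content is verifying (i) $\rho(\sigma_{n-1})\rho(t) = \rho(t)\rho(\sigma_{n-1})$ and (ii) $\rho(\sigma_{n-2})\rho(t)\rho(\sigma_{n-2}) = \rho(t)\rho(\sigma_{n-2})\rho(t)$. Writing $\epsilon := (\ide_X \otimes e_{X,M})(c_{X,X}\act\ide_M)(\ide_X \otimes e_{X,M})$ for the operator on $X^{\otimes 2}\act M$ appearing in $\rho(t)$, and stripping off the common $\ide_{X^{\otimes n-2}}$ prefix, relation (i) asks that $(c_{X,X}\otimes\ide_M)\,\epsilon = \epsilon\,(c_{X,X}\otimes\ide_M)$ and relation (ii) becomes a braid-like identity for $\epsilon$ and the two copies of $c_{X,X}$ in the $n-2$ and $n-1$ slots. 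I expect (ii) to reduce to (i) together with the type BC relation $\rho(\sigma_{n-1})\rho(t)\rho(\sigma_{n-1})\rho(t) = \rho(t)\rho(\sigma_{n-1})\rho(t)\rho(\sigma_{n-1})$ already established in Proposition~\ref{prop:typeBC} (which holds with no symmetry hypothesis): indeed, if $\epsilon$ commutes with the relevant $c_{X,X}$ then the length-4 relation should collapse to a length-3 one. So the crux is really (i).

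\textbf{Resolving (i) via the symmetry hypothesis.} This is where $\mathcal{M}$ being \emph{symmetric}, i.e. $e_{X,M} = e_{X,M}^{-1}$, enters. The operator $\epsilon$ is, by definition of the symmetric case, essentially $e_{X\otimes X, M}$ up to rewriting: comparing with \eqref{eq:brmod1}, $e_{X\otimes X, M} = (\ide_X \otimes e_{X,M})(c_{X,X}\act\ide_M)(\ide_X \otimes e_{X,M})(c_{X,X}^{-1}\act\ide_M)$, and under $e_{X,M}=e_{X,M}^{-1}$ one can show $\epsilon$ agrees with $e_{X\otimes X,M}(c_{X,X}\act\ide_M)$ or a similar canonical expression; the precise matching is the one routine computation I would carry out carefully. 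Once $\epsilon$ is identified with a genuine $e$-morphism indexed by $X\otimes X$, relation (i) — commutation of $c_{X,X}\otimes\ide_M$ with $\epsilon$ — follows from \eqref{eq:brmod2} (with both arguments equal to $X$) exactly as in the proof of Lemma~\ref{lem:cece}(b), i.e. from the naturality of $e$ applied to the morphism $c_{X,X}\colon X\otimes X \to X\otimes X$ in $\mathcal{C}$, together with \eqref{eq:brmod2} to rewrite $e_{X, X\act M}$; here symmetry is used again to invert the direction. Then (ii) follows by combining (i) with the already-proven type BC relation, and $\rho$ is a group homomorphism. I would organize the write-up as: (1) state $\epsilon$ and record $\rho(t) = \ide_{X^{\otimes n-2}}\otimes\epsilon$; (2) dispose of the braid and $|i-j|\geq 2$ and $i\leq n-3$ commutation relations by Lemma~\ref{lem:cece}(a) and level exchange; (3) prove (i) using the symmetry hypothesis and naturality of $e$; (4) deduce (ii) from (i) and the type BC relation of Proposition~\ref{prop:typeBC}.
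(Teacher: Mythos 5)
The decomposition into relations, and your handling of the braid relations, the disjoint-slot commutations, and relation (i), are all fine; in fact (i) is immediate from Lemma~\ref{lem:cece}(b) with $Y=X$ (no symmetry hypothesis needed), so your detour through $e_{X\otimes X,M}$, while workable, addresses the wrong bottleneck. The genuine gap is step (4): relation (ii) does \emph{not} follow from relation (i) together with the type BC relation. Both of those hold in an \emph{arbitrary} braided left $\mathcal{C}$-module category (Lemma~\ref{lem:cece}(b) and Proposition~\ref{prop:typeBC} use no symmetry hypothesis), whereas (ii) is essentially equivalent to the symmetry condition. Concretely, strip the common $\ide_{X^{\otimes n-3}}$ prefix and write $a$ for $c_{X,X}$ acting in slots $n-2,n-1$, write $b$ for $c_{X,X}$ acting in slots $n-1,n$, and set $f:=\ide_{X\otimes X}\otimes e_{X,M}$, so that $\rho(t)$ becomes $fbf$. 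Using only level exchange ($af=fa$) and Lemma~\ref{lem:cece}(a) ($aba=bab$), one gets $\rho(\sigma_{n-2})\rho(t)\rho(\sigma_{n-2})=afbfa=f(aba)f=fbabf$ and $\rho(t)\rho(\sigma_{n-2})\rho(t)=fbfafbf=fbaf^{2}bf$; since $a,b,f$ are isomorphisms, these two agree if and only if $f^{2}=\ide$, i.e.\ $e_{X,M}=e_{X,M}^{-1}$. So if (ii) were a formal consequence of (i) and the BC relation, every braided module category would be symmetric. Moreover, relation (i) concerns the pair $(\sigma_{n-1},t)$, which overlaps in different tensor slots than the pair $(\sigma_{n-2},t)$ appearing in (ii), so there is no ``collapse'' of the length-4 relation into the length-3 one.

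What is needed instead---and what the paper does---is to verify (ii) directly, injecting the symmetry hypothesis into that computation: commute the outer copies of $\rho(\sigma_{n-2})$ past the outer copies of $\ide\otimes e_{X,M}$ by level exchange, apply the Yang--Baxter identity of Lemma~\ref{lem:cece}(a) to the three resulting braidings, insert $e_{X,M}^{2}=\ide$ in the middle (this is the only place symmetry is used in the entire proof), and level-exchange back to reassemble $\rho(t)\rho(\sigma_{n-2})\rho(t)$. In the shorthand above this is exactly the chain $afbfa=f(aba)f=f(bab)f=fba(f^{2})bf=(fbf)a(fbf)$.
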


\begin{proof}
Let $\rho:=\rho_n^{X,M}$.  By Lemma~\ref{lem:cece}(a) and by level exchange, respectively, we have that 
\[
\begin{array}{ll}
\rho(\sigma_i)\rho(\sigma_{i+1})\rho(\sigma_i) = \rho(\sigma_{i+1})\rho(\sigma_i)\rho(\sigma_{i+1}) \, &\text{ for $i=1, \dots, n-2$}, \\[.2pc]
\rho(\sigma_i)\rho(\sigma_j) =  \rho(\sigma_j)\rho(\sigma_i) \, &\text{ for $|i-j| \geq 2$}.
\end{array}
\]
Next, by level exchange  and by Lemma~\ref{lem:cece}(b), respectively, we obtain that 
\[
\rho(\sigma_i)\rho(t) =  \rho(t)\rho(\sigma_i) \, \text{ for $i = 1, \dots, n-3$}, \qquad  \;
\rho(\sigma_{n-1})\rho(t) = \rho(t)\rho(\sigma_{n-1}).
\] 
Finally, the proof that $\rho$ is a group homomorphism is concluded by the computation below: 
{\small
\begin{align*}
& \rho(\sigma_{n-2})\rho(t)\rho(\sigma_{n-2}) \\
& =  \ide_{X^{\otimes n-3}} \otimes (c_{X,X} \otimes \ide_{X \act M})(\ide_{X \otimes X} \otimes e_{X,M})(\ide_X \otimes c_{X,X} \act \ide_M)(\ide_{X \otimes X} \otimes e_{X,M}) (c_{X,X} \otimes \ide_{X \act M})\\
& \overset{\textnormal{level ex.}}{=}  \ide_{X^{\otimes n-3}} \otimes (\ide_{X \otimes X} \otimes e_{X,M})(c_{X,X} \otimes \ide_{X \act M})(\ide_X \otimes c_{X,X} \act \ide_M) (c_{X,X} \otimes \ide_{X \act M}) (\ide_{X \otimes X} \otimes e_{X,M})\\
& \overset{\textnormal{Lem.\,\ref{lem:cece}(a)}}{=}  \ide_{X^{\otimes n-3}} \otimes (\ide_{X \otimes X} \otimes e_{X,M})(\ide_X \otimes c_{X,X} \act \ide_M) (c_{X,X} \otimes \ide_{X \act M})(\ide_X \otimes c_{X,X} \act \ide_M)\\
&\hspace{1.1in} \circ (\ide_{X \otimes X} \otimes e_{X,M})\\
& \overset{e^2=\ide}{=}  \ide_{X^{\otimes n-3}} \otimes (\ide_{X \otimes X} \otimes e_{X,M})(\ide_X \otimes c_{X,X} \act \ide_M) (c_{X,X} \otimes \ide_{X \act M})(\ide_{X \otimes X} \otimes e_{X,M}) \\
&\hspace{1.1in} \circ (\ide_{X \otimes X} \otimes e_{X,M})(\ide_X \otimes c_{X,X} \act \ide_M)(\ide_{X \otimes X} \otimes e_{X,M})\\
&  \overset{\textnormal{level ex.}}{=}  \ide_{X^{\otimes n-3}} \otimes (\ide_{X \otimes X} \otimes e_{X,M})(\ide_X \otimes c_{X,X} \act \ide_M)(\ide_{X \otimes X} \otimes e_{X,M}) (c_{X,X} \otimes \ide_{X \act M}) \\
&\hspace{1.1in} \circ (\ide_{X \otimes X} \otimes e_{X,M})(\ide_X \otimes c_{X,X} \act \ide_M)(\ide_{X \otimes X} \otimes e_{X,M})\\
& = \rho(t)\rho(\sigma_{n-2})\rho(t). 
\end{align*}
}

\vspace{-.2in}

\end{proof}

\begin{remark} \label{rem:allcock}
The idea for Proposition~\ref{prop:typeD} was motivated by the work of Allcock on braid pictures for Artin groups \cite{Allcock}. In particular, the symmetric condition on $\mathcal{M}$ corresponds to the double application of the ``orbifold move" on \cite[page~3459]{Allcock}.
\end{remark}

Now we present the main result of this part; cf. Theorem~\ref{thm:typeBC}.

\begin{theorem} \label{thm:typeD}
Retain the notation of Proposition~\ref{prop:typeD}, and take a finite-dimensional quasitriangular Hopf algebra $H$. Take triangular left coideal subalgebras   $B$ and $B'$. If $\lmod{B} \simeq \lmod{B'}$ as braided left $(\lmod{H})$-module categories, then 
\[
\rho_n^{H_\textnormal{reg}, B_\textnormal{reg}} 
\; \simeq \; \rho_n^{H_\textnormal{reg}, B'_\textnormal{reg}}
\] 
as representations of  $\textnormal{Br}_n^{\textnormal{D}}$, for each $n \geq 2$.
\end{theorem}

\begin{proof}
This proof follows similarly to the proof of Theorem~\ref{thm:typeBC}. Recall the diagram in the proof of Theorem~\ref{thm:typeBC}. The linear map $T_1$ (resp.,  $T_3$) is produced in exactly the same manner as in Theorem~\ref{thm:typeBC}, and it makes the left (resp., right) region of this diagram commute. Finally, we choose $T_2:= s_{H^{\otimes n}, B}$ as in Theorem~\ref{thm:typeBC}. We get that $\rho_n^{H_\textnormal{reg},B_\textnormal{reg}}(\sigma_i) \circ T_2 = T_2 \circ \rho_n^{H_\textnormal{reg},B_\textnormal{reg}}(\sigma_i)$ as in the type BC case. We also obtain that  $\rho_n^{H_\textnormal{reg},B_\textnormal{reg}}(t) \circ T_2 = T_2 \circ \rho_n^{H_\textnormal{reg},B_\textnormal{reg}}(t)$ here by employing the argument in the type BC case and the naturality of $s$.
Thus, we have linear isomorphism $T := T_3\, T_2\, T_1: H^{\otimes n} \otimes B \equivto H^{\otimes n} \otimes B'$ that yields an equivalence $\rho_n^{H_\textnormal{reg}, B_\textnormal{reg}} \simeq \rho_n^{H_\textnormal{reg}, B'_\textnormal{reg}}$ 
of representations of  $\textnormal{Br}_n^{\textnormal{D}}$, for each $n \geq 2$.
\end{proof}

Given the results in this part, and in the previous part,  we inquire:

\begin{question} \label{ques:othertypes}
Are there analogues of Propositions~\ref{prop:typeA}, \ref{prop:typeBC}, and \ref{prop:typeD} for braid groups of other Coxeter types? If so, what type of braided categorical structures are involved? Further,  are there  analogues of Theorems~\ref{thm:typeA}, \ref{thm:typeBC}, and \ref{thm:typeD} in other Coxeter types?
\end{question}


\section{Examples}\label{sec:examples} 

In this section, we classify $K$-matrices  and determine the braided Morita equivalence classes of coideal subalgebras of certain finite-dimensional quasitriangular Hopf algebras $H$.  Here, $H$ will be a finite group algebra in Section~\ref{ss:group}, and the Sweedler algebra in Section~\ref{ss:H4}. 

\smallskip

To proceed, consider the preliminary result below, which refines Proposition~\ref{prop:AM-S}(a).

\begin{proposition}\label{prop:equivalence}
Let $(H, R)$ be a  finite-dimensional  quasitriangular Hopf algebra, and let \linebreak $(B, \, K \! = \! \sum_i K_i\otimes K^i)$, $(B', \, K' \! = \!\sum_j K'_j\otimes K'^j)$ be quasitriangular left $H$-comodule algebras. Then, 
\[
\lmod{B} \; \simeq  \; \lmod{B'}
\]
as braided left $(\lmod{H})$-module categories if and only if each of the conditions below hold.
\begin{enumerate}[\upshape (i)]
\item There exists a $(B', B)$-bimodule $P$ equipped with a $(B',B)$-bimodule map  
\[
\lambda: P\to H\otimes P, \quad p \mapsto p_{\langle -1 \rangle} \otimes p_{\langle 0 \rangle}
\]
serving as a left $H$-coaction map.
\item There exists a $(B, B')$-bimodule $Q$ such that $P\otimes_{B} Q \cong B'$ as $B'$-bimodules and $Q\otimes_{B'} P \cong B$ as $B$-bimodules (i.e., $P$ is invertible). 
\item For all $(X, \cdot) \in \lmod{H}$ and $(M, \ast) \in\lmod{B}$, with $p\in P$, $x \in X$, $m\in M$, we have that:
\[
\textstyle \sum_i (p_{\langle -1 \rangle} \, K_i\cdot x) \, \otimes \,p_{\langle 0 \rangle} \,\otimes_{B} \,(K^i \ast m) \; = \;  \sum_j (K'_j \, p_{\langle -1 \rangle}\cdot x) \,\otimes \,(K'^j \, p_{\langle 0 \rangle})\, \otimes_{B} \, m.
\]
\end{enumerate}
 
\end{proposition}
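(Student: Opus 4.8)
The plan is to upgrade Proposition~\ref{prop:AM-S} by keeping track of the braiding data, so I would build on the structural description of module-category equivalences $F\colon \lmod{B}\to \lmod{B'}$ that it provides. For the forward direction, suppose $\lmod{B}\simeq\lmod{B'}$ as braided left $(\lmod{H})$-module categories via $(F,s,\text{braided compatibility})$. By Proposition~\ref{prop:AM-S}(a), $F\cong P\otimes_B-$ for a $(B',B)$-bimodule $P$ with an $(B',B)$-bimodule left $H$-coaction $\lambda$; this gives condition (i). The quasi-inverse of $F$ is of the form $Q\otimes_{B'}-$, and the unit/counit isomorphisms of the adjunction give $P\otimes_B Q\cong B'$ and $Q\otimes_{B'}P\cong B$ as bimodules, yielding condition (ii). For condition (iii), I would unwind what the braided-module-functor compatibility \eqref{eq:brmodfun}, $e'_{X,F(M)}\, s_{X,M} = s_{X,M}\, F(e_{X,M})$, says once $F$ is replaced by $P\otimes_B-$ and the braidings $e,e'$ are written via the $K$-matrices using formula~\eqref{eq:formula-e}. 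The map $s_{X,M}\colon P\otimes_B(X\act M)\to X\act(P\otimes_B M)$ is, up to the identifications, the natural isomorphism whose effect on a generator $p\otimes_B(x\otimes m)$ involves moving $x$ past $p$ using the $H$-coaction $\lambda$ on $P$ — i.e. it sends (roughly) $p\otimes_B(x\otimes m)\mapsto (p_{\langle-1\rangle}\cdot x)\otimes(p_{\langle 0\rangle}\otimes_B m)$. Substituting this description of $s$, the formula~\eqref{eq:formula-e} for $e$ on $\lmod{B}$ (involving $K=\sum_i K_i\otimes K^i$), and the formula for $e'$ on $\lmod{B'}$ (involving $K'=\sum_j K'_j\otimes K'^j$) into \eqref{eq:brmodfun} and evaluating on $p\otimes_B(x\otimes m)$ gives exactly the stated equality in (iii). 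Conversely, given (i)--(iii), condition (ii) makes $P\otimes_B-$ an equivalence of categories, the bimodule-coaction $\lambda$ of (i) makes it a left $(\lmod{H})$-module functor (this is precisely the content of \cite[Proposition~1.23]{AndruskiewitschMombelli} run backwards), and (iii) is the rewritten form of \eqref{eq:brmodfun}, so $P\otimes_B-$ is a braided module equivalence.

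The concrete steps, in order: (1) recall from Proposition~\ref{prop:AM-S}(a) / \cite[Proposition~1.23]{AndruskiewitschMombelli} the bijection between left $(\lmod{H})$-module functors $\lmod{B}\to\lmod{B'}$ and $(B',B)$-bimodules with bimodule $H$-coaction, and the fact that such a functor is an equivalence iff the underlying bimodule is invertible — this handles conditions (i), (ii) and the ``non-braided'' part of both directions. (2) Write down explicitly the module-functor constraint $s_{X,M}$ attached to $P\otimes_B-$ in terms of the coaction $\lambda$; I would cite or reprove the standard formula $s_{X,M}(p\otimes_B(x\otimes m)) = (p_{\langle-1\rangle}\cdot x)\otimes(p_{\langle 0\rangle}\otimes_B m)$. (3) Plug formula~\eqref{eq:formula-e} for $e_{X,M}$ (using $K$) and for $e'_{X,F(M)}$ (using $K'$) into the defining identity~\eqref{eq:brmodfun} of a braided module functor, evaluate both sides on a generator $p\otimes_B(x\otimes m)$, and read off condition~(iii). (4) Check that all the isomorphisms in (i)--(iii) are independent of the various choices (of $P$, of dual-basis presentations of $K$, $K'$), or else phrase the statement so that only existence is asserted, as in the excerpt. (5) Assemble: forward direction uses (1) then (2)+(3); reverse direction runs (1) backwards to get an $(\lmod{H})$-module equivalence and uses (3) backwards to promote it to a braided one.

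I expect the main obstacle to be Step~(2)--(3): pinning down the module-functor constraint $s$ for $P\otimes_B-$ precisely enough that \eqref{eq:brmodfun} unwinds cleanly into the displayed identity in (iii), and making sure the side on which the $H$-coaction $\lambda$ acts, the order of the tensor factors, and the placement of $K$ versus $K'$ all match the statement exactly. This is essentially a bookkeeping computation in $H\otimes X\otimes P\otimes_B M$ versus $H\otimes X\otimes P\otimes_B M$, but the braiding formula~\eqref{eq:formula-e} is itself a ``flip'' (source and target are the same object $X\act M$), so one must be careful that the left $H$-coaction on $P$ interacts with the $H$-action on $X$ and the $B$-action on $M$ in the way forced by the bimodule structure of $P$ and the compatibility of $\lambda$ with the $B'$- and $B$-actions. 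A secondary, more routine, point is verifying that the existence statements are well posed (Step~(4)); this should follow from naturality and from the fact that any two choices of $P$ differ by an invertible bimodule isomorphism, but it needs to be checked so that condition~(iii) is genuinely a condition on $(B,K)$ and $(B',K')$ and not on auxiliary data.
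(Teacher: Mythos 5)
Your proposal is correct and follows essentially the same route as the paper's proof: both reduce conditions (i)--(ii) to \cite[Proposition~1.23]{AndruskiewitschMombelli}, write the module functor constraint of $P\otimes_B-$ explicitly as $s_{X,M}(p\otimes_B x\otimes m)=(p_{\langle-1\rangle}\cdot x)\otimes p_{\langle 0\rangle}\otimes_B m$, and then identify condition (iii) with the braided module functor axiom \eqref{eq:brmodfun} after substituting the $K$-matrix formulas \eqref{eq:formula-e} for the braidings. The bookkeeping you flag as the main obstacle works out exactly as you predict, yielding the displayed identity in (iii) on the nose.
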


\begin{proof}
Items (i) and (ii) provide necessary and sufficient conditions for $\lmod{B}  \simeq  \lmod{B'}$
as left $(\lmod{H})$-module categories by \cite[Proposition~1.23]{AndruskiewitschMombelli}. In particular, the module functor $(F, s):\lmod{B} \to \lmod{B'}$ is given by $F=P\otimes_B-$, and for all $(X, \cdot) \in \lmod{H}$ and $(M, \ast) \in\lmod{B}$, the module functor constraint is given by
\[
s_{X, M}: P\otimes_B X\otimes M\to X\otimes P\otimes_B M, \quad p \, \otimes_B   x  \otimes  m\mapsto (p_{\langle -1 \rangle}\cdot x) \, \otimes  \, p_{\langle 0 \rangle} \, \otimes_B \, m.
\]
Now item (iii) is a necessary and sufficient condition for this equivalence to be of braided module categories. Here, the braidings of $\lmod{B}$ and $\lmod{B'}$ are given by the respective $K$-matrices as described by \eqref{eq:formula-e}. With this, condition \eqref{eq:brmodfun} is equivalent to item~(iii). 
\end{proof}


\subsection{With group algebras}  \label{ss:group} 
Let $G$ be a finite group with a central element $u \in Z(G)$ with $u^2 = 1$. The group algebra $\Bbbk G$ is a finite-dimensional quasitriangular Hopf algebra with $R$-matrix:
\begin{equation} \label{eq:Ru}
\textstyle R_u=\frac{1}{2}(1 \otimes 1 + 1 \otimes u + u \otimes 1 - u \otimes u).
\end{equation}
Also, $(R_u)_{21} = (R_u)^{-1}$.  The left coideal subalgebras of $\Bbbk G$ are $\Bbbk L$, for $L$ a subgroup of $G$. Retain this notation for the rest of this section.

\begin{proposition} \label{prop:kG-K}
    The $K$-matrices of $\Bbbk L$ are $K=a\otimes 1$, where $a\in C_G(L)$ (centralizer of $L$ in $G$).
\end{proposition}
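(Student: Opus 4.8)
The plan is to solve the defining equations \eqref{eq:K-matrix} directly for $K \in \Bbbk G \otimes \Bbbk L$, using the very simple structure of the group algebra. Write $K = \sum_{g \in G,\, \ell \in L} \kappa_{g,\ell}\, g \otimes \ell$ with scalars $\kappa_{g,\ell}$. Since $\Bbbk L$ is cocommutative and the $R$-matrix $R_u$ is triangular with $(R_u)_{21} = (R_u)^{-1}$, conditions \eqref{eq:K-matrix}(i) and (ii) should simplify dramatically; I would first exploit condition (ii), $(\ide_H \otimes \delta)(K) = (R_u)_{21} K_{13} (R_u)_{12}$, to pin down the support of $K$. The key observation is that $R_u$ commutes with everything in $\Bbbk G \otimes \Bbbk G$ that is "diagonal enough" — in fact $R_u = \frac12(1+u)\otimes 1 + \frac12(1-u)\otimes u$ is a sum of orthogonal-idempotent-times-grouplike terms, so conjugating $K_{13}$ by $R_u$ acts by the grouplike $u$ in a controlled way. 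This forces the $\Bbbk L$-component of $K$ to be grouplike, i.e. $K = a \otimes \ell_0$ for a single $a \in G$, $\ell_0 \in L$ (up to the bialgebra-style argument that $\delta(K^i)$ being grouplike in the relevant slot forces $K^i$ grouplike, since grouplikes are linearly independent).

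Next I would feed $K = a \otimes \ell_0$ into the remaining conditions. Condition (iii), $K\delta(b) = \delta(b)K$ for all $b \in B = \Bbbk L$, becomes $(a\otimes \ell_0)(\ell \otimes \ell) = (\ell \otimes \ell)(a \otimes \ell_0)$ for all $\ell \in L$, i.e. $a\ell = \ell a$ and $\ell_0 \ell = \ell \ell_0$ in $G$; the first says $a \in C_G(L)$ and the second says $\ell_0 \in Z(L)$ — but I still need to rule out nontrivial $\ell_0$. For that I return to condition (i): $(\Delta \otimes \ide_B)(K) = K_{23}(R_u)_{21}K_{13}(R_u)_{21}^{-1}$. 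With $K = a \otimes \ell_0$ grouplike and $R_u$ commuting with $a \otimes a$ (since $u$ is central, $a\otimes a$ commutes with $R_u$), the right-hand side collapses to $K_{23}K_{13} = a \otimes \ell_0 \otimes \ell_0 \cdot$ wait — more carefully, $K_{23}K_{13} = (1\otimes a \otimes \ell_0)(a \otimes 1 \otimes \ell_0) = a \otimes a \otimes \ell_0^2$, while the left-hand side is $\Delta(a)\otimes \ell_0 = a \otimes a \otimes \ell_0$; comparing the third tensor factor forces $\ell_0^2 = \ell_0$, hence $\ell_0 = 1$. (Invertibility of $K$ is automatic here since $a \otimes 1$ is a product of grouplikes.) Conversely, for any $a \in C_G(L)$ one checks that $K = a \otimes 1$ satisfies all three conditions: (iii) is $a \in C_G(L)$, and (i), (ii) reduce to $\Delta(a) = a \otimes a$ together with $R_u$ commuting with $a \otimes a$ and with $a \otimes 1$ in the appropriate legs, all of which hold since $a$ is grouplike and $u$ is central.

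The main obstacle I anticipate is the bookkeeping in extracting "grouplike" from condition (ii): one must argue that if $\sum_i K_i \otimes \delta(K^i) = (R_u)_{21}(\sum_i K_i \otimes 1 \otimes K^i)(R_u)_{12}$, then writing $K^i$ in the grouplike basis of $\Bbbk L$ and using linear independence of grouplikes in the second leg forces each $K^i$ appearing to be grouplike and matched with a single $K_i$. Concretely it is cleanest to first note $R_u(g\otimes h)R_u^{-1}$ can be computed for grouplikes, diagonalize the action of conjugation by $R_u$, and then compare coefficients in the basis $\{g \otimes h \otimes \ell\}$ of $\Bbbk G \otimes \Bbbk G \otimes \Bbbk L$. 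This is entirely elementary but requires care; once it is done, conditions (iii) and (i) dispatch the rest immediately as above. I would present the forward direction via this coefficient analysis and the (easy) converse by direct substitution, remarking that the triangularity $(R_u)_{21} = (R_u)^{-1}$ is what makes the conjugations trivial on grouplike arguments.
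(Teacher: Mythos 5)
Your overall strategy---expanding $K$ in the group basis, using the centrality of $u$ to make the $R_u$-conjugations act trivially on grouplike legs, and then reading off constraints from the three axioms in \eqref{eq:K-matrix}---is exactly the paper's approach, and your converse direction and your use of (iii) to get $a\in C_G(L)$ are fine. But there is one genuine gap: you claim that condition \eqref{eq:K-matrix}(ii) already forces $K$ to be a \emph{single} elementary tensor $a\otimes\ell_0$. It does not. Writing $K=\sum_{g,\ell}\kappa_{g,\ell}\,g\otimes\ell$, condition (ii) compares $\sum\kappa_{g,\ell}\,g\otimes\ell\otimes\ell$ with $\sum\kappa_{g,\ell}\,g\otimes 1\otimes\ell$ (the conjugation by $R_u$ being trivial), which kills every term with $\ell\neq 1$ but says nothing about how many group elements appear in the first leg. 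For example $K=\alpha g_1\otimes 1+\beta g_2\otimes 1$ with $g_1\neq g_2$ passes condition (ii) verbatim. Note also that this correct reading of (ii) already yields $\ell_0=1$ directly, so your later step deducing $\ell_0^2=\ell_0$ from (i) is addressing a constraint that (ii) has already imposed, while the constraint that (i) actually needs to supply is being skipped.

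The missing step is the coefficient analysis of condition \eqref{eq:K-matrix}(i) applied to the general form $K=\sum_g\alpha_g\,g\otimes 1$: comparing $\sum_g\alpha_g\,g\otimes g\otimes 1$ with $K_{23}K_{13}=\sum_{g',g''}\alpha_{g'}\alpha_{g''}\,g'\otimes g''\otimes 1$ gives $\alpha_g^2=\alpha_g$ and the orthogonality relations $\alpha_{g'}\alpha_{g''}=0$ for $g'\neq g''$, and only then does invertibility of $K$ force exactly one $\alpha_a=1$. This is precisely how the paper pins down the single grouplike $a$. Your proposal never runs (i) against the multi-term form, so as written it does not rule out non-elementary candidates such as the one above. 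The fix is easy and stays entirely within your framework---just perform the basis comparison for (i) before, rather than after, assuming $K$ is an elementary tensor---but without it the argument is incomplete.
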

\begin{proof} Take $\widehat{K}=\sum_{g\in G}\alpha_g g\in \Bbbk G$, for $\alpha_g\in \Bbbk$. By \eqref{eq:K-matrix-coideal-sub}$(\widehat{\text{i}})$ and a routine calculation, 
\[
  \textstyle  \sum_{g\in G} \, \alpha_{g} \, g\otimes g  \; = \; \sum_{g', g''\in G} \alpha_{g'} \alpha_{g''} \, g'\otimes g''.
\] So, $\alpha_{g}\in\{0,1\}$, for all $g\in G$, and $\alpha_{g'}\alpha_{g''}=0$ if $g'\neq g''$. Since $K$ is invertible, there exist only one $\alpha_{a}\neq 0$, for some $a\in G$. Then, $\widehat{K}=a$, for some $a\in G$. By  \eqref{eq:K-matrix-coideal-sub}$(\widehat{\text{iii}})$,  $a\ell=\ell a$, for all $\ell\in L$. Therefore, $a\in C_G(L)$. Note that \eqref{eq:K-matrix-coideal-sub}$(\widehat{\text{ii}})$ is satisfied. By Lemma \ref{lemma:criteria-K-matrix}, $K=a\otimes 1$, where $a\in C_G(L)$.
\end{proof}

In the next result, we show how to distinguish the braided Morita equivalence classes of the quasitriangular coideal subalgebras $(\Bbbk L, a \otimes 1)$, for $a \in C_G(L)$, above.

\begin{theorem} \label{thm:kG}
Given the quasitriangular Hopf algebra $(\Bbbk G, R_u)$, 
its quasitriangular coideal subalgebras $(\Bbbk L, K=a\otimes 1)$ for $a \in C_G(L)$ and $(\Bbbk L', K'=a'\otimes 1)$ for $a' \in C_G(L')$ are braided Morita equivalent if and only if $L'=L^g :=\{g\ell g^{-1}, \ell\in L\}$ and $K'=K^g := gag^{-1}\otimes 1$, for some $g\in G$.
\end{theorem}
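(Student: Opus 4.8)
The plan is to apply Proposition~\ref{prop:equivalence} with $B = \Bbbk L$ and $B' = \Bbbk L'$, so that the task reduces to classifying the invertible $(B',B)$-bimodules $P$ carrying a compatible $H$-coaction and satisfying the braiding-compatibility condition (iii). For the ``if'' direction, given $g \in G$ with $L' = L^g$ and $a' = gag^{-1}$, I would take $P = \Bbbk G \cdot g^{-1} = g^{-1}\Bbbk L$ — more precisely, the one-dimensional-over-$\Bbbk L$ bimodule $P = \Bbbk L' \, g$ with left $\Bbbk L'$-action by multiplication, right $\Bbbk L$-action by multiplication (using $g\ell = (g\ell g^{-1})g \in \Bbbk L' g$), and $H$-coaction $\lambda(xg) = xg \otimes xg \in \Bbbk G \otimes P$ (the restriction of $\Delta$). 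One checks $P \otimes_{\Bbbk L} Q \cong \Bbbk L'$ and $Q \otimes_{\Bbbk L'} P \cong \Bbbk L$ with $Q = \Bbbk L \, g^{-1}$, giving (i) and (ii); then (iii) becomes the identity $g \cdot a = a' \cdot g$ in $G$ after unwinding the $K$-matrices $K = a \otimes 1$, $K' = a' \otimes 1$ via \eqref{eq:formula-e} — this is exactly $ga = gag^{-1}g = a'g$, so it holds. Hence $\lmod{\Bbbk L} \simeq \lmod{\Bbbk L'}$ as braided $(\lmod{\Bbbk G})$-module categories.

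For the ``only if'' direction, suppose $\lmod{\Bbbk L}\simeq\lmod{\Bbbk L'}$ as braided module categories, and let $P$, $Q$, $\lambda$ be as supplied by Proposition~\ref{prop:equivalence}. The key structural input is that $P$ is an invertible $(\Bbbk L', \Bbbk L)$-bimodule, so in particular (as in Proposition~\ref{prop:AM-S}(b), since $\Bbbk L$ and $\Bbbk L'$ are $H$-simple and augmented) $P \cong \Bbbk L'$ as a left $\Bbbk L'$-module and $P \cong \Bbbk L$ as a right $\Bbbk L$-module; thus $\dim_\Bbbk P = |L| = |L'|$, and $P$ is free of rank one on each side. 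Fixing a basis element $p_0$ with $P = \Bbbk L' \, p_0 = p_0 \, \Bbbk L$, the right action of $\ell \in L$ is $p_0 \ell = \psi(\ell) \, p_0$ for a uniquely determined group homomorphism $\psi\colon L \to L'$ (valued in the group-like elements $L'$ because $p_0 \ell$ must be a $\Bbbk$-multiple of a group element and right multiplication is an algebra map); invertibility of $P$ forces $\psi$ to be a group isomorphism $L \xrightarrow{\sim} L'$. Next I would use the $H$-coaction: $\lambda(p_0) \in \Bbbk G \otimes P$, and coassociativity and counitality, together with the bimodule-map property of $\lambda$, pin down $\lambda(p_0) = w \otimes p_0$ for a single group element $w \in G$ (the coaction being grouplike-to-grouplike). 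The $(B',B)$-bimodule-map condition on $\lambda$ then reads, for $\ell' \in L'$, $\ell \in L$: $\ell' w \ell \otimes p_0 = \lambda(\ell' p_0 \ell) = \ell' \cdot w\psi(\ell)^{-1} \otimes \ell' p_0 \psi(\ell)^{-1}\cdots$ — more cleanly, comparing $\lambda(\ell' p_0) = \ell' w \otimes \ell' p_0$ with $\lambda(p_0 \ell) = w\,? \otimes \cdots$ gives $w \ell = \psi(\ell) w$ in $G$ for all $\ell \in L$, i.e. $\psi = \mathrm{Ad}(w)|_L$ and hence $L' = \psi(L) = w L w^{-1} = L^{w}$. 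Finally, condition (iii), after substituting $K = a\otimes 1$, $K' = a'\otimes 1$ and $\lambda(p_0) = w\otimes p_0$, collapses to $w a \otimes p_0 \otimes_B m = a' w \otimes p_0 \otimes_B m$ for all $m$, forcing $wa = a'w$, i.e. $a' = waw^{-1}$; so $L' = L^w$ and $K' = K^w$ with $g := w$.

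The main obstacle I anticipate is the bookkeeping in the ``only if'' direction: proving that the abstract invertible bimodule $P$ with $H$-coaction must, after choosing a one-dimensional normalization, be literally of the translated form $\Bbbk L' g$ with grouplike coaction. Concretely, one must argue that (a) the right $\Bbbk L$-module generator $p_0$ can be chosen so that $p_0 \ell$ is a \emph{scalar multiple of a group element} rather than a general linear combination — this uses that $P$ is invertible of rank one, so $\mathrm{End}$ of $P$ as a bimodule is just $\Bbbk$ and right multiplication $L \to \Aut$ lands in scalars times grouplikes — and (b) that the coaction $\lambda(p_0)$, which a priori is $\sum w_i \otimes p_i$, reduces to a single grouplike tensor $p_0$: this follows because $(\Bbbk G \otimes P, \lambda)$ decomposes over the grouplikes of $\Bbbk G$ into $G$-graded pieces and $P$ being simple as a $(\Bbbk L',\Bbbk L)$-bimodule (it is invertible) forces the coaction to be concentrated in one degree. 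Once these two normalization lemmas are in hand, the matching of $L, L', a, a'$ via $w$ is the short computation sketched above. I would also double-check the direction of the conjugation convention ($L^g = gLg^{-1}$ vs $g^{-1}Lg$) against the statement so that the final $g$ matches; this is purely cosmetic.
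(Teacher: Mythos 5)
Your ``if'' direction is, up to relabeling, exactly the paper's argument: your $P=\Bbbk L'\,g$ coincides as a set with the paper's $\Bbbk(L^g gL)=\Bbbk(gL)$, with the same left/right actions, the same coaction (restriction of $\Delta$), the same inverse $Q=\Bbbk(Lg^{-1})$, and the same one-line verification of Proposition~\ref{prop:equivalence}(iii) from $a\in C_G(L)$ and $a'=gag^{-1}$. No issues there. In the ``only if'' direction your overall plan (normalize a generator $p_0$, show $\lambda(p_0)=w\otimes p_0$, read off $L'=L^w$ and $a'=waw^{-1}$ from the bimodule structure and condition (iii)) is sound and in fact more self-contained than the paper's, which first invokes Ostrik's classification of indecomposable module categories over $\lmod{\Bbbk G}$ to get $L'=L^g$ and only then pins down $P\cong\Bbbk(L^g gL)$ by a double-coset dimension count.

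However, both justifications you offer for your two normalization lemmas are false, and this is a genuine gap. An invertible $(\Bbbk L',\Bbbk L)$-bimodule is in general neither simple nor has endomorphism ring $\Bbbk$: its bimodule endomorphism ring is isomorphic to $Z(\Bbbk L)$, and already the regular bimodule $\Bbbk L$ over itself (with coaction $\Delta$) is invertible, is not simple for $|L|>1$, and has its coaction spread over all of $L$ rather than ``concentrated in one degree.'' So neither (a) nor (b) follows from invertibility, and the literal conclusion of (b) is not even what you want --- you need a \emph{homogeneous generator}, not a homogeneous $P$. The correct repair is to use the $G$-grading $P=\bigoplus_{v\in G}P_v$ induced by $\lambda$: the bimodule-map property of $\lambda$ gives $\ell'P_v\ell\subseteq P_{\ell'v\ell}$, so left and right multiplication permute the graded pieces bijectively; hence the support is a union of $(L',L)$-double cosets, all pieces in one double coset have equal nonzero dimension, and each double-coset component has dimension at least $|L'v_0|=|L'|=|L|$. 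Since $\dim_\Bbbk P=|L|$ (this is where Proposition~\ref{prop:AM-S}(b), i.e.\ Skryabin, enters), there is exactly one double coset $L'wL$, it has cardinality $|L|$, which forces $L'=wLw^{-1}$, and every $P_v$ is one-dimensional. Any nonzero $p_0\in P_w$ is then a homogeneous generator with $\lambda(p_0)=w\otimes p_0$, and your computations go through --- with the minor caveat that $p_0\ell=c_\ell\,(w\ell w^{-1})p_0$ only up to a character $c\colon L\to\Bbbk^\times$, so $\psi$ is $\Ad(w)$ only up to scalars; this does not affect $L'=L^w$ or the final identity $wa=a'w$ extracted from condition (iii). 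With that repair your proof is correct.
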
 

\begin{proof}
Suppose that $L'=L^g$ and $K'=K^g$, for some $g\in G$. Since $L'=L^g$, we get that $\lmod{\Bbbk L} \simeq \lmod{\Bbbk L'}$ as left $(\lmod{\Bbbk G})$-module categories; see \cite[Theorem~3.1]{Ostrik-IMRN} and also \cite[Exercise~7.4.11, Remark~7.4.12]{EGNO}. Let $P=\Bbbk (L^ggL)$ be a $(\Bbbk L^g, \Bbbk L)$-bimodule with multiplication as the left and right action, and comultiplication as the left $\Bbbk G$-coaction. Likewise, take the $(\Bbbk L, \Bbbk L^g)$-bimodule $Q=\Bbbk(Lg^{-1}L^g)$. Then, by using \cite[Lemma~4.5.2]{Cohn} (see also \cite[Proposition~2.20]{Morales-et-al}), we have the isomorphisms: 
\[
\begin{array}{ll}
P\otimes_{\Bbbk L} Q \cong \Bbbk L^g, &g\ell_1\ell_2\otimes_{\Bbbk L} \ell'_1\ell'_2g^{-1}\mapsto g\ell_1\ell_2\ell'_1\ell'_2g^{-1},\\[.4pc]
Q\otimes_{\Bbbk L^g} P \cong \Bbbk L, 
&\ell_1\ell_2g^{-1}\otimes_{\Bbbk L^g} g\ell'_1\ell'_2\mapsto \ell_1\ell_2\ell'_1\ell'_2,
\end{array}
\]
as $\Bbbk L^g$-bimodules and $\Bbbk L$-bimodules, respectively.  Thus, we have items (i) and (ii) of Proposition~\ref{prop:equivalence}. Item (iii) holds as for all $X\in\lmod{H}$, $M\in\lmod{B}$, with $x\in X$, $m\in M$, $p=g\ell_1\ell_2\in P$,
\[
g\ell_1\ell_2ax \, \otimes \, g\ell_1\ell_2 \, \otimes_{\Bbbk L} \, m  \overset{a\in C_G(L)}{=} ga\ell_1\ell_2x \,\otimes \, g\ell_1\ell_2 \, \otimes_{\Bbbk L} \, m \overset{a'=gag^{-1}}{=} a'g \ell_1\ell_2x \,\otimes \,g\ell_1\ell_2 \,\otimes_{\Bbbk L} \,m.
\] 
Therefore, by Proposition \ref{prop:equivalence}, $(\Bbbk L, a\otimes 1)$ is braided Morita equivalent to $(\Bbbk L', a'\otimes 1)$.

Conversely, since $\lmod{\Bbbk L} \simeq \lmod{\Bbbk L'}$ as left $(\lmod{\Bbbk G})$-module categories, we get that $L'=L^g$, for some $g\in G$; see \cite[Theorem~3.1]{Ostrik-IMRN} and also \cite[Exercise~7.4.11, Remark~7.4.12]{EGNO}. (This is consistent with Corollary~\ref{cor:csa-dim-same}.) Now using the equivalence of braided module categories,  take the corresponding  $(\Bbbk L^g, \Bbbk L)$-bimodule $P$ from Proposition~\ref{prop:equivalence}. Here, $P \cong \Bbbk (\bigoplus_{i=1}^n L^g h_i L)$ for some $h_i \in G$. Since $P \cong \Bbbk L$ as right $\Bbbk L$-modules by Proposition~\ref{prop:AM-S}(b), and thus as vector spaces, $P \cong L^g h L$ for some $h \in G$. Now $|L| = |L^g h L| = |L^g| \, |L|\,  / \, |L^g \cap L^h|$. So, $h = g$ and $P \cong L^g g L$ as $(\Bbbk L^g, \Bbbk L)$-bimodules.
By item (iii) of Proposition~\ref{prop:equivalence}, we obtain that
\[
(p_{\langle -1 \rangle} \, a \cdot x) \otimes p_{\langle 0 \rangle} \otimes_B m \; = \; 
(a' \, p_{\langle -1 \rangle}  \cdot x) \otimes p_{\langle 0 \rangle} \otimes_B m.
\]
for all $p \in P$, $x \in X$ for $(X, \cdot) \in \lmod{\Bbbk G}$, and $m \in M$ for $(M, \ast) \in \lmod{\Bbbk L}$.
Choosing $p=g$ via the isomorphism above, 
with $m=1$ and $x=1$, we obtain that $a'=gag^{-1}$. Therefore, $K' = K^g$.
\end{proof}

\begin{example}
\begin{enumerate}[\upshape (a)]
\item If $G$ is abelian, then the braided Morita equivalence classes of quasitriangular coideal subalgebras of $\Bbbk G$ are singletons of the form $\{(\Bbbk L, \, K = g \otimes 1) \}_{L \leq G, \, g \in L}$. 

\smallskip

\item Take the group $S_3$, which has subgroups $L_\ast:=\langle (\ast) \rangle$, for $\ast = 1, 12, 13, 23, 123$, and $S_3$. The braided Morita equivalence classes of quasitriangular coideal subalgebras of $\Bbbk S_3$ are: 
{\small
\begin{itemize}
\smallskip
\item  $\{(L_1, 1 \otimes 1)\}$, 
\smallskip
\item  $\{(L_1, (12) \otimes 1), \, (L_1, (13) \otimes 1), \,  (L_1, (23) \otimes 1)\}$, 
\smallskip
\item  $\{(L_1, (123) \otimes 1), \,  (L_1, (132) \otimes 1)\}$,
\smallskip
\item  $\{(L_{12}, 1 \otimes 1), \, (L_{13}, 1 \otimes 1), \,  (L_{23}, 1 \otimes 1)\}$, 
\smallskip

\item  $\{(L_{12}, (12) \otimes 1), \, (L_{13}, (13) \otimes 1), \,  (L_{23}, (23) \otimes 1)\}$,
\smallskip
\item $\{(L_{123}, 1 \otimes 1)\}$, 
\smallskip
\item $\{(L_{123}, (123) \otimes 1), \,  (L_{123}, (132) \otimes 1)\}$, 
\smallskip
\item $\{(S_3, 1 \otimes 1)\}$.
\end{itemize}
}
\end{enumerate}
\end{example}


\subsection{With the Sweedler algebra} \label{ss:H4} Let $H_4$ be the Sweedler Hopf algebra. As an algebra $H_4$ is generated by $g$ and $x$ subject to the relations $g^2=1$, $x^2=0$ and $xg=-gx$. The coalgebra structure is determined by $\Delta(g)=g\otimes g$ and $\Delta(x)=x\otimes g+1\otimes x$. The quasitriangular structures on $H_4$ are given by the $R$-matrices:
\begin{equation}\label{eq:Rlambda}
\textstyle R_\lambda=\frac{1}{2}(1\otimes 1+1\otimes g+g\otimes 1-g\otimes g)+\frac{\lambda}{2}(x\otimes x+x\otimes gx+gx\otimes gx-gx\otimes x), \quad \lambda\in\Bbbk,
\end{equation}
and $(R_{\lambda})_{21} = (R_\lambda)^{-1}$. See \cite[$\S$7.3, Exercise 12.2.11]{Radford1994}. The left coideal subalgebras of $H_4$ are $\Bbbk$,  $\Bbbk 1 \oplus \Bbbk gx$, $\Bbbk 1 \oplus \Bbbk (g+\tau gx)$ for $\tau\in \Bbbk$,  and $H_4$; see \cite[$\S$4]{CKS}. Retain this notation for the rest of this section.

\begin{proposition} \label{prop:H4-K}
The $K$-matrices for coideal subalgebras $B$ of $H_4$ are classified as follows.
\begin{enumerate}[\upshape (a)]
    \item For $B$ = $\Bbbk$, we have that $K\in \{1\otimes 1, \, g\otimes 1\}$ when $\lambda = 0$, and $K=1\otimes 1$ when $\lambda \neq 0$.
    \smallskip
     \item For $B = \Bbbk 1 \oplus \Bbbk (g+\tau gx)$, we have that $K\in \{1\otimes 1, \, g\otimes 1\}$ when $\lambda = \tau = 0$, and $K=1\otimes 1$ otherwise.
    \smallskip
    \item  The only $K$-matrix of $\Bbbk 1 \oplus \Bbbk gx$ is $K=1\otimes 1$.
       \smallskip 
    \item The only $K$-matrix of $H_4$ is $K=1\otimes 1$.
\end{enumerate}
\end{proposition}

\begin{proof} Let us use Lemma \ref{lemma:criteria-K-matrix}. Take $\widehat{K}=\alpha 1+\beta g+\gamma x+\delta gx\in H_4$. By a routine calculation, 
\[
(R_\lambda)_{21}\widehat{K}_1(R_\lambda)_{12}=\alpha\, 1\otimes 1+ \beta\, g\otimes 1+\gamma\,x\otimes g+\delta\, gx\otimes g +\beta\lambda\, gx\otimes gx+\beta\lambda\,  gx\otimes x. 
\]

For part (a), take the coideal subalgebra $B=\Bbbk$. By \eqref{eq:K-matrix-coideal-sub}$(\widehat{\text{ii}})$, $\gamma=\delta=\beta\lambda=0$. Then, $\widehat{K}=\alpha 1+\beta g$, where $\beta\lambda=0$. Note that \eqref{eq:K-matrix-coideal-sub}$(\widehat{\text{iii}})$ is satisfied. Finally, by \eqref{eq:K-matrix-coideal-sub}$(\widehat{\text{i}})$, 
\[\alpha\, 1\otimes 1+\beta\, g\otimes g \, =\, \alpha^2\,1\otimes 1+\alpha\beta\,g\otimes 1+\alpha\beta\, 1\otimes g+\beta^2\, g\otimes g.\] 
So $\alpha^2=\alpha$, $\alpha\beta=0$, and $\beta^2=\beta$. Since $\widehat{K}$ is invertible, the possible cases are $\beta=0$ and $\alpha=1$, or $\alpha=0$ and $\beta=1$. In the first case, we have that $\widehat{K}=1$, and thus $K=1\otimes 1$. In the second case, $\lambda=0$ and $\widehat{K}=g$; thus, $K=g\otimes 1$ when $\lambda=0$. 

For part (b), take the coideal subalgebra $B=\Bbbk 1 \oplus \Bbbk (g+\tau gx)$ for any $\tau \in \Bbbk$. By \eqref{eq:K-matrix-coideal-sub}$(\widehat{\text{iii}})$ with $b=g+\tau gx$, a routine calculation yields
\[
-\gamma\, gx+(-\delta+\beta\tau)x \, = \, \gamma\, gx+(\delta -\beta\tau)x.
\] 
So, $\gamma=0$ and $\delta=\beta\tau$. Then, $\widehat{K}=\alpha 1+\beta g+\beta\tau gx$. By \eqref{eq:K-matrix-coideal-sub}$(\widehat{\text{i}})$, 
\begin{align*}
\alpha\, 1\otimes 1+\beta g\otimes g+\beta\tau\, (gx\otimes 1+g\otimes gx)  = &
\; \alpha^2\, 1\otimes 1+\alpha \beta\tau\, gx\otimes g+ \beta\lambda(\alpha+\beta)gx\otimes gx+\alpha\beta\, g\otimes 1\\
& +\beta(\alpha\lambda+\beta\lambda-\beta\tau^2)gx\otimes x+\alpha\beta\,1\otimes g\\
& +\beta^2\tau\, gx\otimes 1+\beta^2\,g\otimes g+\beta\tau\alpha\, 1\otimes gx+\beta^2\tau\,g\otimes gx.
\end{align*} 
So, $\alpha^2=\alpha$, $\beta^2=\beta$, $\beta \tau^2=0$, $\beta\lambda=0$, and $\alpha\beta=0$. Since $\widehat{K}$ is invertible, $(\alpha, \beta) = (0,1)$ or $(1,0)$. If $\alpha=0$ and $\beta=1$, then $\lambda=\tau=0$, and $\widehat{K}=g$. Therefore, $K=g\otimes 1$, for $\lambda=\tau=0$. If $\alpha=1$ and $\beta=0$, then $\widehat{K}=1$. So $K=1\otimes 1$. Note that in either case the condition \eqref{eq:K-matrix-coideal-sub}$(\widehat{\text{ii}})$ is satisfied.

The proofs of parts (c) and (d) follow likewise.
\end{proof}

Next, we obtain the braided Morita classes of quasitriangular coideal subalgebras here.

\begin{theorem} \label{thm:H4}
The braided Morita equivalence classes of quasitriangular coideal subalgebras of $(H_4, R_\lambda)$ are classified as follows. When $\lambda = 0$, the classes are
\[
\{(\Bbbk, 1\otimes 1)\},\, \{(\Bbbk, g\otimes 1)\},\; \{(\Bbbk 1\oplus \Bbbk g, 1\otimes 1)\},\; \{(\Bbbk 1\oplus \Bbbk g, g\otimes 1)\},\; \{(\Bbbk 1\oplus \Bbbk gx, 1\otimes 1)\}, \;\{(H_4, 1\otimes 1)\},
\]
\[
\{(\Bbbk 1 \oplus \Bbbk(g+\tau gx), 1\otimes 1), \,  (\Bbbk 1 \oplus \Bbbk(g - \tau gx), 1\otimes 1)\}_{\tau \in \Bbbk^\times}.
\]
When $\lambda \neq 0$, the classes are
\[
\{(\Bbbk, 1\otimes 1)\}, \;
\{(\Bbbk 1\oplus \Bbbk g, 1\otimes 1)\},  \;
\{(\Bbbk 1\oplus \Bbbk gx, 1\otimes 1)\}, \;
\{(H_4, 1\otimes 1)\},
\]
\[
\{(\Bbbk 1 \oplus \Bbbk(g+\tau gx), 1\otimes 1), \,  (\Bbbk 1 \oplus \Bbbk(g - \tau gx), 1\otimes 1)\}_{\tau \in \Bbbk^\times}.
\]
\end{theorem}

\begin{proof}
The quasitriangular left coideal subalgebras of $H_4$ when $\lambda=0$ are given in  Proposition~\ref{prop:H4-K}, and we will analyze this case in detail. The result for the $\lambda \neq 0$ case will follow by similar reasoning as the arguments below will not depend on the parameter $\lambda$. Note that by Corollary~\ref{cor:csa-dim-same}, quasitriangular coideal subalgebras that have different dimensions are not braided Morita equivalent. So, we only need to analyze the equivalence between those of dimension 1 and between those dimension~2.

Let us consider the representation of $\textnormal{Br}_2^{\textnormal{BC}}$ from  Proposition~\ref{prop:typeBC}:
\[
\begin{array}{rl}
 \rho_2^{(H_4)_{\textnormal{reg}}, B_{\textnormal{reg}}}(t)\colon H_4\otimes H_4 \otimes B &\longrightarrow H_4\otimes H_4 \otimes B\\[.2pc]
h\otimes h'\otimes b &\mapsto \sum_i h\otimes K_i h'\otimes K^i b,
\end{array}
\] where $K= \sum_i K_i \otimes K^i$ is the $K$-matrix of the left coideal subalgebra $B$. Its trace is $16\,\text{dim}_\Bbbk B$ when $K=1\otimes 1$, and its trace  is 0 when $K=g\otimes 1$.  
Now by Theorem \ref{thm:typeBC}, $(\Bbbk, 1\otimes 1)$ and  $(\Bbbk, g\otimes 1)$ are not braided Morita equivalent. Likewise, for any $\tau\in\Bbbk$, the pair $(\Bbbk 1 \oplus \Bbbk (g+\tau gx), 1\otimes 1)$  and $(\Bbbk 1 \oplus \Bbbk g, g\otimes 1)$, as well as the pair $(\Bbbk 1 \oplus \Bbbk g, g\otimes 1)$ and  $(\Bbbk 1 \oplus \Bbbk gx, 1\otimes 1)$, are not braided Morita equivalent. 

Moreover, for any $\tau\in\Bbbk$, the algebra $\Bbbk 1 \oplus \Bbbk (g+\tau gx)$ is semisimple \cite[Proposition~4.6]{CKS}, whereas the algebra $\Bbbk 1 \oplus \Bbbk gx$ is non-semisimple \cite[Proposition~4.3]{CKS}. So, these (quasitriangular) coideal subalgebras cannot be (braided) Morita equivalent (with respect to any $K$-matrix). 

Finally, we consider when $(\Bbbk 1 \oplus \Bbbk (g+\tau gx), 1\otimes 1)$ and $(\Bbbk 1 \oplus \Bbbk (g+\tau' gx), 1\otimes 1)$ are braided Morita equivalent for $\tau, \tau' \in \Bbbk$. Let $B_\tau: =\Bbbk 1 \oplus \Bbbk (g+\tau gx)$, with $y_\tau:=g+\tau gx$. Then, it is straightforward to see that $B_\tau \cong B_{\tau'}$ as $H_4$-comodule algebras if and only if $\tau' = \pm \tau$. Namely, an algebra morphism must send $y_\tau$ to $\pm y_{\tau'}$, and the $H_4$-comodule condition implies that $\tau' = \pm \tau$. On the other hand, \cite[Theorem~4.2]{GarciaMombelli} implies that $B_\tau$ and $B_{\tau'}$ are braided Morita equivalent if and only if there is a grouplike element $h \in G(H_4)$ such that $h B_\tau h^{-1} \cong B_{\tau'}$ as $H_4$-comodule algebras. Since $g B_\tau g^{-1} \cong B_{-\tau}$, we conclude that $(B_\tau, 1\otimes 1)$ and $(B_{\tau'}, 1\otimes 1)$ are braided Morita equivalent precisely when $\tau'=\pm \tau$. This completes the proof.
\end{proof}


\section*{Acknowledgements} 

The authors are very grateful for the anonymous referee's time and careful consideration when reviewing our manuscript. Their feedback led us to add Lemma~\ref{lem:HsimpleCSA}, to adjust the proof of Proposition~\ref{prop:AM-S}(b), and to streamline the prose of Section~\ref{sec:results} and computations in Section~\ref{sec:examples}.

M.M. thanks the warm hospitality of C.W. during a visit to Rice University, where part of this work was done. M.M. was partially supported by Simons Foundation Award 889000 as part of the Simons
Collaboration on Global Categorical Symmetries. C.W. thanks M.M. for her aforementioned visit; it was fantastic. C.W. also thanks Jacob Kesten for discussions about Proposition~\ref{prop:typeD} and Question~\ref{ques:othertypes}. C.W. was partially supported by the US National Science Foundation grant \#DMS-2348833.


\bibliography{MW-2023}
\bibliographystyle{amsrefs}


\end{document}